\newcommand{\dif}{\mathrm{d}}
\newcommand{\mr}{\mathbb{R}}
\newcommand{\mc}{\mathbb{C}}
\newcommand{\diag}{diag}
\newcommand{\sech}{sech}
\newcommand{\bff}{\mathbf{f}}
\newcommand{\bfF}{\mathbf{F}}
\newcommand{\vphi}{\varphi}
\newcommand{\tensor}[1]{\mathbf{#1}}
\newcommand{\vphin}{\varphi^N}
\newcommand{\bvphin}{\bm{{\varphi}}^N}
\newtheorem{theorem}{Theorem}[section]
\newtheorem{definition}[theorem]{Definition}
\newtheorem{lemma}[theorem]{Lemma}
\crefname{hypothesis}{Hypothesis}{Hypotheses}
\title{The Sparse-Grid-Based Adaptive Spectral Koopman Method}
\author{
Bian Li\thanks{Department of Industrial and Systems Engineering, Lehigh University, Bethlehem, PA 
  (\href{mailto:bil215@lehigh.edu}{bil215@lehigh.edu}, \href{mailto:xiy518@lehigh.edu}{xiy518@lehigh.edu})}
\and 
Yue Yu\thanks{Department of Mathematics, Lehigh University, Bethlehem, PA   
  (\href{mailto:yuy214@lehigh.edu}{yuy214@lehigh.edu})}
\and 
Xiu Yang\footnotemark[1]
}
\begin{document}

\maketitle

\begin{abstract}
The adaptive spectral Koopman (ASK) method was introduced to numerically solve autonomous dynamical systems that lay the foundation of numerous applications across different fields in science and engineering. 
Although ASK achieves high accuracy, it is computationally more expensive for multi-dimensional systems compared with conventional time integration schemes like Runge-Kutta. 
In this work, we combine the sparse grid and ASK to accelerate the computation for multi-dimensional systems.
This sparse-grid-based ASK (SASK) method uses the Smolyak structure to construct multi-dimensional collocation points as well as associated polynomials that are used to approximate eigenfunctions of the Koopman operator of the system.
In this way, the number of collocation points is reduced compared with using the tensor product rule.  
We demonstrate that SASK can be used to solve partial differential equations based-on their semi-discrete forms.
Numerical experiments illustrate that SASK balances the accuracy with the computational cost, and hence accelerates ASK. 
\end{abstract}

\textbf{Keywords:  } \textit{dynamical systems, sparse grids, Koopman operator, spectral-collocation method, partial differential equations}

\section{Introduction}

The Koopman operator~\cite{koopman1931hamiltonian} is an infinite-dimensional \emph{linear} operator that describes the evolution of a set of observables.
It provides a principled and often global framework to describe the dynamics of a finite-dimensional \emph{nonlinear} system. 
Consequently, the Koopman operator approach to \emph{nonlinear} dynamical systems has attracted considerable attention in recent years. 
One can define its eigenvalues, eigenfunctions, and modes, and then use them to represent dynamically interpretable low-dimensional embeddings of high-dimensional state spaces to construct solutions through linear superposition~\cite{brunton2021modern}. 
In particular, the spectrum of the Koopman operator in properly defined spaces does not contain continuous spectra, and the observable of the system can be represented as a linear combination of eigenfunctions associated with
\emph{discrete} eigenvalues of the Koopman operator~\cite{mezic2020spectrum,korda2020data,nakao2020spectral}. 
   
The Koopman operator provides powerful analytic tools to understand behaviors of dynamical systems by conducting Koopman mode analysis.
Such analysis starts with a choice of a set of linearly independent observables, and the Koopman operator is then analyzed through its action on the subspace spanned by the chosen observables~\cite{mezic2005spectral}.
This approach has been applied to study ordinary differential equations (ODEs), partial differential equations (PDEs)~\cite{wilson2016isostable,nathan2018applied,page2018koopman,nakao2020spectral},  disspative dynamical systems~\cite{mezic2020spectrum}, etc.
Furthermore, novel numerical schemes, especially data-driven algorithms, motivated by or related to the Koopman operator have attracted much attention in the past decade.
For example, the {\em dynamic mode decomposition} (DMD)~\cite{rowley2009spectral, schmid2010dynamic, tu2014dynamic,proctor2016dynamic, kutz2016multiresolution, nathan2018applied,askham2018variable} and its variants like extended DMD (EDMD)~\cite{williams2015data} use snapshots of a dynamical system to extract temporal features as well as correlated spatial activity.
Subsequently, they can predict the behavior of the system in a short time.
These approaches have been applied to design filters
(e.g.,~\cite{surana2016linear, netto2018robust}), train neural networks
(e.g.,~\cite{dogra2020optimizing}), etc.

In~\cite{li2022adaptive} we propose a novel numerical method based on the spectral-collocation method (i.e., the pseudospectral method)~\cite{fornberg1998practical,trefethen2000spectral} to implement the Koopman operator approach to solving nonlinear ordinary differential equations. 
This method leverages the differentiation matrix in spectral methods to approximate the generator of the Koopman operator, and then conducts eigen-decomposition numerically to obtain eigenvalues and eigenvectors that approximate Koopman operator's eigenvalues and eigenfunctions, respectively.
Here, each element of an eigenvector is the approximation of the associated eigenfunction evaluated at a collocation point.
The Koopman modes are approximated by computing eigenvalues, eigenvectors, and the observable based on the initial state.
This approach is more efficient than the conventional ODE solvers such as Runge-Kutta and Adam-Bashforth for low-dimensional ODEs in terms of computational time , especially when evaluating the dynamics is costly~\cite{li2022adaptive}.
This is because it allows evaluating the dynamic of the system at multiple collocation points simultaneously instead of computing them sequentially at different time steps as the aforementioned state-of-the-art ODE solvers.
In other words, ASK introduces a new parallelization mechanism for solving ODEs, which makes it more efficient than conventional approaches for some problems.

However, ASK's efficiency decreases as the system's dimension increases as it employs the tensor product rule to construct multi-dimensional collocation points and basis functions for polynomial interpolation. Consequently, the number of such points as well as basis functions increases exponentially.
This number is associated with the size of the eigen-decomposition problem and the linear system in the ASK scheme.
Therefore, ASK is less efficient in multi-dimensional cases.
To overcome this difficulty, we propose to combine the sparse grids method with ASK, wherein the Smolyak structure is applied to construct collocations points.
This sparse-grids-based ASK (SASK) method reduces the number of collocation points and that of basis functions used in the vanilla ASK. 
Hence, the computational efficiency is enhanced. In numerical experiments, we demonstrate that SASK can solve PDEs accurately based on their semi-discrete forms which are high-dimensional ODE systems.


The paper is organized as follows. \Cref{sec:background} introduces the background topics. A detailed discussion of  the sparse-grid-based adaptive spectral Koopman method follows in \Cref{sec:SASK}. We then show our numerical results in \Cref{sec:SASK_experiments}. Finally, \Cref{sec:dicussion_conclusion} concludes the paper with a summary and further discussion.


\section{Background}
\label{sec:background}

\subsection{Koopman operator} 
\label{subsec:koopman}
Borrowing notions from~\cite{kutz2016dynamic}, we consider an autonomous system described by the ordinary differential equations
\begin{equation}
  \label{eq:auto}
  \frac{\dif \bm x}{\dif t} = \bff(\bm x),
\end{equation}
where the state $\bm x=(x_1,x_2,\dotsc,x_d)^\top$ belongs to an $d$-dimensional smooth manifold $\mathscr{M}$, and the dynamics $\bff: \mathscr{M} \rightarrow \mathscr{M}$ does not explicitly depend on time $t$. 
Here, $\bff$ is a possibly nonlinear vector-valued smooth function, of the same dimension as $\bm x$.
In many studies, we aim to investigate the behavior of observables on the state space.
For this purpose, we define an observable to be a scalar function $g:\mathscr{M}\rightarrow\mathbb{R}$, where $g$ is an element of some function space $\mathcal{G}$ (e.g., $\mathcal{G}=L^2(\mathscr{M})$ as in~\cite{mezic2005spectral}). 
The flow map $\bfF_t: \mathscr{M} \rightarrow \mathscr{M}$ induced by the dynamical system~\eqref{eq:auto} depicts the evolution of the system as
\begin{equation}
\label{eqn:flow}
    \bm x(t_0 + t) = \bfF_t(\bm x(t_0)) = \bm x(t_0) + \int_{t_0}^{t_0 + t} \bff(\bm x(s)) \, d s. 
\end{equation}
Now we define the Koopman operator for continuous-time dynamical systems as follows~\cite{mezic2020spectrum}:
\begin{definition}
  Consider a family of operators $\{\mathcal{K}_t\}_{t\geq 0}$ acting on the space of observables so that
  \[\mathcal{K}_t g(\bm x_0) = g(\mathbf{F}_t(\bm x_0)),\] where $\bm x_0 = \bm x(t_0)$. 
  We call the family of operators $\mathcal{K}_t$ indexed by time t the Koopman
  operators of the continuous-time system~\eqref{eq:auto}.
\end{definition}
By definition, $\mathcal{K}_t$ is a \emph{linear} operator acting on the
function space $\mathcal{G}$ for each fixed $t$. Moreover, $\{\mathcal{K}_t\}$
form a semi-group.



\subsection{Infinitesimal generator} 
\label{subsec:generator}
The Koopman spectral theory~\cite{mezic2005spectral, rowley2009spectral} 
unveils properties that enable the Koopman operator to convert nonlinear finite-dimensional dynamics into linear infinite-dimensional dynamics. 
A key component in such spectral analysis is the infinitesimal generator (or generator for brievity) of the Koopman operator.
Specifically, 
for any smooth obervable function $g$, the generator of the Koopman operator $\mathcal{K}_t$, denoted as $\mathcal{K}$, is given by
\begin{align}
  \mathcal{K} g = \lim_{t \rightarrow 0} \frac{\mathcal{K}_t g
  -g}{t}, \label{eqn:Koopman_def_generator}
\end{align}
which leads to
\begin{align}
  \label{eqn:Koopman_derivative_relation}
  \mathcal{K}g(\bm x) = \nabla g(\bm x) \cdot \frac{\dif \bm
  x}{\dif t} = \frac{\dif g(\bm x)}{\dif t}. 
\end{align}
Denoting $\vphi$ an eigenfunction of $\mathcal{K}$ and $\lambda$ the eigenvalue associated with $\vphi$, we have
  $\mathcal{K} \vphi(\bm x) = \lambda \vphi(\bm x)$, 
 and hence
$ \lambda \vphi(\bm x) = \mathcal{K} \vphi(\bm x) = \frac{\dif \vphi(\bm x)}{\dif t}.$ 
This indicates that $\varphi(\bm x(t_0+t))=e^{\lambda t}\varphi(\bm x(t_0))$, i.e., 
\begin{equation}
  \mathcal{K}_t \varphi(\bm x(t_0)) = e^{\lambda t} \varphi(\bm x(t_0)).
\end{equation}
Therefor, $\varphi$ is an eigenfunction of $\mathcal{K}_t$ associated with eigenvalue $\lambda$. 
Of note, following notations in literatur, we consider the eigenpair for $\mathcal{K}_t$ as $(\varphi, \lambda)$ instead of $(\varphi, e^{\lambda t})$.

Now suppose $g$ exists in the function space spanned by all the eigenfunctions $\vphi_j$ (associated with
eigenvalues $\lambda_j$) of $\mathcal{K}$,
i.e., $g(\bm x) = \sum_j^\infty c_j \vphi_j(\bm x)$, then
\begin{align}
  \mathcal{K}_t [g(\bm x(t_0))] = \mathcal{K}_t \left[ \sum_j^\infty c_j
  \vphi_j(\bm x(t_0)) \right]
  = \sum_j^\infty c_j \mathcal{K}_t[\vphi_j(\bm x(t_0))] 
    \label{eqn:koopman_solution}.
\end{align}
Hence, 
\begin{equation}
  \label{eqn:inf_expansion}
  g(\bm x(t_0+t)) = \sum_j^\infty c_j \vphi_j(\bm x(t_0)) e^{\lambda_j t}. 
\end{equation}
Similarly, for a vector-valued observable $\bm g: \mathscr{M} \rightarrow \mr^d$ with $\bm g:=(g_1(\bm x), g_2(\bm x), \dotsc, g_d(\bm x))^\top$, the system of observables becomes 
\begin{align}
    \frac{\dif \bm g(\bm x)}{\dif t} = \mathcal{K} \bm g(\bm x)
    = 
    \begin{bmatrix}
        \mathcal{K} g_1(\bm x) \\
        \mathcal{K} g_2(\bm x) \\
        \vdots \\
        \mathcal{K} g_d(\bm x) \\
    \end{bmatrix}
    = \sum_j^\infty \lambda_j \vphi_j(\bm x) \bm c_j, \label{eqn:Koopman_system_observable}
\end{align}
where $\bm c_j \in \mc^d$ is called the $j$th Koopman mode with
$\bm c_j := (c_j^1, c_j^2, \dotsc, c_j^d)^\top$. 

The ASK method uses the following truncated form of~\Cref{eqn:inf_expansion}
\begin{equation}
  \label{eqn:finite_expansion}
  g(\bm x(t_0+t)) = \sum_j^\infty c_j \vphi_j(\bm x(t_0)) e^{\lambda_j t}\approx \sum_{j=0}^N \tilde c_j \vphi_j^N(\bm x(t_0)) e^{\tilde\lambda_j t}
\end{equation}
for $d=1$. Here, $\varphi_j$ are approximated by $N$-th order interpolation polynomials $\varphi_j^N$, where $N$ is a positive integer. Also, $\lambda_j$ and $c_j$ are approximated by $\tilde\lambda_j$ and $\tilde c_j$~\cite{li2022adaptive}. For $d>1$, $\varphi_j^N$ is constructed by tensor product rule with one-dimensional interpolation polynomials.



\section{Sparse-Grid-Based Adaptive Spectral Koopman Method}
\label{sec:SASK}

As mentioned in~\cite{li2022adaptive}, ASK suffers from the curse of dimensionality as we approach high-dimensional systems. It is not surprising to see this phenomenon in numerical integration and interpolation on multidimensional domains when the tensor product rule is used to construct high-dimensional quadrature points. 
Specifically, if $N$ denotes the number of points for one dimension and $d$ denotes the number of dimensions, the tensor product rule gives a domain containing $N^d$ points, which quickly grows prohibitive with $d$.

The sparse grid method is one of the most effective approaches to overcome the aforementioned difficulties to a certain extent as it needs significantly fewer points in the computation.
This method is also known as the Smolyak grid (or Smolyak's construction) in the name of Sergei A. Smolyak~\cite{smolyak1963quadrature}. 
A series of seminal works further studied the properties of the sparse grid method and completed the framework~\cite{bungartz2004sparse,griebel1990combination,gerstner1998numerical,zenger1991sparse}. 
The full $N^d$-grid is a direct consequence of the tensor product of the points in each dimension, while the sparse grid method chooses only a subset of these grid points so that the total number increases much slower in $d$. 
As shown by Zenger~\cite{zenger1991sparse}, the total number of points is polynomial in $d$. 
This drastically reduces the computation complexity, enabling a more efficient variant of ASK.
In this section, we introduce the sparse-grid-based adaptive spectral Koopman (SASK) method, an accelerated version of ASK. 

\subsection{Sparse grids for interpolation}
\label{subsec:smolyak_grid}

The idea of sparse grids is that some grid points contribute more than the others in the numerical approximation. Thus, it does not undermine the interpolation if only a subset of the important grid points are utilized. 
In fact, the order of the error only increases slightly~\cite{zenger1991sparse}. 
The polynomial interpolation in SASK borrows the ideas from~\cite{judd2014smolyak} which leveraged Chebyshev extreme points to generate the sparse grids and Chebyshev polynomials to construct the basis functions. 
Following a similar structure, this subsection first discusses the generation of the sparse grid. 
Then, the basis function interpolation is explained, followed by the computation of the coefficients in the linear combination of the basis functions. 

\subsubsection{Sparse grid construction}
\label{subsubsec:sparse_grid_construction}
In this work, the points of a sparse grid are based on the extreme points of the Chebyshev polynomials. 
Specifically, denote $\xi_j = \cos\big(\frac{j \pi}{n-1}\big)$ for $j \in \{0,1,\dotsc,n-1\}$. 
The construction of the sparse grids in a multi-dimensional domain builds on the uni-dimensional set of Chebyshev points that satisfy the Smolyak rule. 

Let $\{\mathcal{N}_i\}_{i \geq 1}$ be a sequence of sets which contain the Chebyshev points such that the number of points in set $i$ is $m(i) = 2^{i - 1} + 1$ for $i \geq 2$ and $m(1) = 1$, and that $\mathcal{N}_i \subset \mathcal{N}_{i+1}$. Then,
\begin{align*}
    \mathcal{N}_1 &= \{ 0 \}, \quad
    \mathcal{N}_2 = \{ 0, -1, 1 \}, \quad
    \mathcal{N}_3 = \left\{ 0, -1, 1, - \frac{\sqrt{2}}{2}, \frac{\sqrt{2}}{2} \right\}. 
\end{align*}
To construct the sparse grid, we need another parameter, the approximation level $\kappa \in \mathbb{Z}_{\geq 0}$. This parameter controls the number of points $n$ in one dimension, thus further dictating the overall degree of approximation. In particular, $n = 2^\kappa + 1$ when $\kappa \geq 1$ and $n = 1$ when $\kappa = 0$. Let $i_j$ denote the index of the set in dimension $j$. Then, the Smolyak rule states that
\begin{align*}
    d \leq \sum_{j = 1}^d i_j \leq d + \kappa.
\end{align*}
Here, we show an example with $d = 2, \kappa = 2$. In this case, $2 \leq i_1 + i_2 \leq 4$, and hence the possible combinations are as follows:
\begin{equation}
\begin{aligned}
    &1) \enspace i_1 = 1, i_2 = 1, \quad 2) \enspace i_1 = 2, i_2 = 1, \quad 3) \enspace i_1 = 3, i_2 = 1, \\
    &4) \enspace i_1 = 1, i_2 = 2, \quad 5) \enspace i_1 = 1, i_2 = 3, \quad 6) \enspace i_1 = 2, i_2 = 2.
\end{aligned}
  \label{eq:indx_comb}
\end{equation}
Let $\mathcal{S}(\cdot, \cdot)$ be the tensor product of two sets of points.
Then, the combinations $(i_1, i_2)$ in~\Cref{eq:indx_comb} provide  $\mathcal{S}(\mathcal{N}_{i_1}, \mathcal{N}_{i_2})$. 
For example, $\mathcal{S}(\mathcal{N}_1, \mathcal{N}_3) = \left\{ (0, 0), (0, -1), (0, 1), (0, -\frac{\sqrt{2}}{2}), (0, \frac{\sqrt{2}}{2}) \right\}$.
In this way, the sparse grid can be constructed as the union of $\mathcal{S}(\mathcal{N}_{i_1}, \mathcal{N}_{i_2})$. 
The illustration of the sparse grids and its comparison with the full grids are enclosed in the appendix ~\Cref{append:sparse}.

By construction, there are repetitions of points in the union of
$\mathcal{S}(\mathcal{N}_{i_1}, \mathcal{N}_{i_2})$ since the sets are nested.
For example, $\mathcal{S}(\mathcal{N}_1, \mathcal{N}_2)\cap \mathcal{S}(\mathcal{N}_1, \mathcal{N}_3)=\mathcal{S}(\mathcal{N}_1, \mathcal{N}_2)$.
Hence, a more concise way to construct sparse grids is to apply disjoint sets~\cite{gerstner1998numerical,judd2014smolyak}.
Denote $\mathcal{A}_i := \mathcal{N}_{i} \backslash \mathcal{N}_{i-1}$ for $i = 2,3,\dotsc$ and $\mathcal{A}_1 := \mathcal{N}_1$. Then, we have $\mathcal{A}_i \cap \mathcal{A}_{j \neq i} = \emptyset$. The number of points in $\mathcal{A}_i$ is computed by $\bar{m}(i) = m(i) - m(i-1) = 2^{i-2}$ for $i \geq 3$, $\bar{m}(1) = 1$, and $\bar{m}(2) = 2$. Specifically,
\begin{align*}
    \mathcal{A}_1 &= \{ 0 \}, \quad
    \mathcal{A}_2 = \{-1, 1 \}, \quad 
    \mathcal{A}_3 = \left\{ - \frac{\sqrt{2}}{2}, \frac{\sqrt{2}}{2} \right\}. 
\end{align*}
Subsequently, we use the union of $\mathcal{S}(\mathcal{A}_{i_1}, \mathcal{A}_{i_2})$ to construct sparse grids. 
By construction, $ \bigcup_{i_1, i_2}\mathcal{S}(\mathcal{N}_{i_1}, \mathcal{N}_{i_2}) = \bigcup_{i_1, i_2}\mathcal{S}(\mathcal{A}_{i_1}, \mathcal{A}_{i_2})$.

\subsubsection{Polynomial interpolation}
We aim to approximate a smooth multivariate function $h(\bm x)$ with a linear combination of polynomials that serve as the basis functions. 
Here, we choose the Chebyshev polynomials of the first kind to be the univariate basis functions. It follows to construct multivariate basis functions with the tensor product of the univariate basis functions. 
The Chebyshev polynomials of the first kind $T_n$ are given by a recurrence relation: $T_{n+1}(x) = 2x T_n(x) - T_{n-1}(x)$ with $T_0(x) = 1, T_1(x) = x$ and $x \in [-1,1]$. Hence, the basis functions used are $\psi_1(x) = 1, \psi_2(x) = x, \psi_3(x) = 2x^2 - 1, \psi_4(x) = 4x^3 - 3x$, and so on. Corresponding to the sets $\{ \mathcal{A}_i \}$, we define the disjoint sets of uni-dimensional basis functions $\{ \mathcal{F}_i \}$ by
\begin{align*}
    \mathcal{F}_1 &= \{ \psi_1(x) \}, \quad
    \mathcal{F}_2 = \{\psi_2(x), \psi_3(x) \}, \quad 
    \mathcal{F}_3 = \left\{ \psi_4(x), \psi_5(x) \right\}.
\end{align*}
Let $(x_1, x_2) \in [-1, 1] \times [-1, 1]$. By the Smolyak rule, the example above has the following basis function tensor products,
\begin{align*}
    &1)\quad  \mathcal{S}(\mathcal{F}_1, \mathcal{F}_1) = 
            \{ \psi_1(x_1) \psi_1(x_2) \}, \\ 
    &2)\quad  \mathcal{S}(\mathcal{F}_2, \mathcal{F}_1) =
            \{ \psi_2(x_1) \psi_1(x_2), \psi_3(x_1) \psi_1(x_2)\}, \\ 
    &3)\quad  \mathcal{S}(\mathcal{F}_3, \mathcal{F}_1) =
            \{ \psi_4(x_1) \psi_1(x_2), \psi_5(x_1) \psi_1(x_2)\}, \\
    &4)\quad   \mathcal{S}(\mathcal{F}_1, \mathcal{F}_2) = 
            \{ \psi_1(x_1)\psi_2(x_2), \psi_1(x_1)\psi_3(x_2) \}, \\ 
    &5)\quad  \mathcal{S}(\mathcal{F}_1, \mathcal{F}_3) = 
            \{ \psi_1(x_1) \psi_4(x_2), \psi_1(x_1) \psi_5(x_2)\}, \\
    &6)\quad \mathcal{S}(\mathcal{F}_2, \mathcal{F}_2) = 
            \{ \psi_2(x_1)\psi_2(x_2), \psi_2(x_1)\psi_3(x_2),
                \psi_3(x_1)\psi_2(x_2), \psi_3(x_1)\psi_3(x_2) \}.
\end{align*}
The union of these $\mathcal{S}(\mathcal{F}_{i_1}, \mathcal{F}_{i_2})$ forms the
basis functions for the polynomial interpolation.

Suppose the total number of sparse grid points is $N$. Then, the total number of
basis functions is also $N$, and we denote them as $\Psi_{l}(\bm x)$ by ordering them with index $l$.
For example, $\Psi_1(\bm x) = \psi_1(x_1) \psi_1(x_2), \Psi_2(\bm x) = \psi_2(x_1) \psi_1(x_2)$ and so on. 
It then remains to approximate $h(\bm x)$ as the following linear combination,
\begin{align*}
    h(\bm x) \approx h^N(\bm x) = \sum_{l=1}^N w_l \Psi_l(\bm x), 
\end{align*}
where $w_l$ denote the unknown coefficients. 
Given the grid points $\{ \bm{\xi}_l \}_{l=1}^N$, we can write 
\begin{align}
    \begin{bmatrix}
        h^N(\bm \xi_1) \\
        h^N(\bm \xi_2) \\
        \vdots \\
        h^N(\bm \xi_N)
    \end{bmatrix}
    = 
    \begin{bmatrix}
        \Psi_1(\bm \xi_1) & \Psi_2(\bm \xi_1) & \dots & \Psi_N(\bm \xi_1) \\
        \Psi_1(\bm \xi_2) & \Psi_2(\bm \xi_2) & \dots & \Psi_N(\bm \xi_2) \\
        \vdots & \vdots & \ddots & \vdots \\
        \Psi_1(\bm \xi_N) & \Psi_2(\bm \xi_N) & \dots & \Psi_N(\bm \xi_N) \\
    \end{bmatrix}
    \begin{bmatrix}
        w_1 \\
        w_2 \\
        \vdots \\
        w_N 
    \end{bmatrix} \label{eqn:sparse_grid_interpolation}
\end{align}
as $\bm h^N = \tensor M \bm w$, where $\bm h^N = \left(h^N(\bm\xi_1),\dotsc, h^N(\bm\xi_N)\right)^\top, \bm w = (w_1,\dotsc, w_N)^\top$, and $M_{ij}=\Psi_j(\bm\xi_i)$. 
Here, matrix $\tensor M$ is full-ranked due to the orthogonality of Chebyshev polynomials. 
Vector $\bm w$ can be obtained by $\bm w = \tensor M^{-1} \bm h^N$ when $\bm h^N$ and $\tensor M$ are known.

\subsection{Finite-dimensional approximation}
\label{subsec:SASK_finite_approximation}
To leverage the properties of the Koopman operator for solving dynamical systems, we intend to find the approximation of~\Cref{eqn:inf_expansion} as
\begin{align}
    \label{eqn:trun_expansion}
    g(\bm x(t + t_0)) \approx g_N(\bm x(t + t_0)) = \sum_{j=1}^{N} \tilde c_j \vphi_j^N(\bm x(t_0)) e^{\tilde \lambda_j t},
\end{align}
where $\tilde c_j$ is the approximate Koopman mode, $\tilde \lambda_j$ is the approximate eigenvalue, and $\vphi_j^N$ is the polynomial approximation of eigenfunction $\varphi_j$.
Without loss of generality, we will assume that $t_0 = 0$ and denote $\bm x_0 := \bm x(t_0) = \bm x(0)$. 
The property of the infinitesimal generator~\Cref{eqn:Koopman_derivative_relation} leads to $\mathcal{K} \vphi(\bm x) = \frac{\dif \bm x}{\dif t} \cdot \nabla \vphi(\bm x)$ for any eigenfunction $\varphi$. 
Since $\frac{\dif \bm x}{\dif t} = \bff(\bm x)$, we have
\begin{align}
    \mathcal{K} \vphi = \bff \cdot \nabla \vphi = f_1 \frac{\partial  \vphi}{\partial x_1} + f_2\frac{\partial \vphi}{\partial x_2} + \dotsc + f_d\frac{\partial \vphi}{\partial x_d}.
    \label{eqn:generator_expansion}
\end{align}
The polynomial approximation $\vphin_j$ in~\Cref{eqn:trun_expansion} can be
obtained based on~\Cref{eqn:generator_expansion}.

Consider the following polynomial approximation of an eigenfunction
\begin{align*}
    \vphi(\bm x) \approx \vphin(\bm x) = \sum_{l=1}^N w_l \Psi_l(\bm x).
\end{align*}
It then follows that 
\begin{align*}
    \frac{\partial \vphi(\bm x)}{\partial x_i} \approx \frac{\partial \vphin(\bm x)}{\partial x_i} = \sum_{l=1}^N w_l \frac{\partial \Psi_l(\bm x)}{\partial x_i} \quad \forall i.
\end{align*}
Denote the sparse grid points in $\mathbb{R}^d$ by $\{ \bm \xi_l \}_{l=1}^N$, where $\bm \xi_l := (\xi_{l_1}, \xi_{l_2}, \dotsc, \xi_{l_d}) \in \mr^d$. 
Replacing $h$ in~\Cref{eqn:sparse_grid_interpolation} with $\varphi$, we have $\bvphin = \tensor M \bm w$, where $\bvphin$ is the vector of $\varphi$ evaluated at $\bm \xi_l$.
Accordingly, let matrix $\tensor G_i$ be $\partial_{x_i}\Psi_l(\bm x)$
evaluated at the sparse grids points, we have
\begin{align*}
    \tensor G_i = 
    \begin{bmatrix}
         \frac{\partial \Psi_1}{\partial x_i} (\bm \xi_1) & \frac{\partial \Psi_2}{\partial x_i} (\bm \xi_1) & \dots & \frac{\partial \Psi_N}{\partial x_i} (\bm \xi_1) \\
        \frac{\partial \Psi_1}{\partial x_i} (\bm \xi_2) & \frac{\partial \Psi_2}{\partial x_i} (\bm \xi_2) & \dots & \frac{\partial \Psi_N}{\partial x_i} (\bm \xi_2) \\
        \vdots & \vdots & \ddots & \vdots \\
        \frac{\partial \Psi_1}{\partial x_i} (\bm \xi_N) & \frac{\partial \Psi_2}{\partial x_i} (\bm \xi_N) & \dots & \frac{\partial \Psi_N}{\partial x_i} (\bm \xi_N) \\
    \end{bmatrix}.
\end{align*}
Then, $\partial_{x_i} \bvphin = \tensor G_i \bm w$, where $(\partial_{x_i} \vphin)_l := \frac{\partial \Psi}{\partial x_i}(\bm \xi_l)$. 
Let $\tensor K \in \mr^{N \times N}$ be the finite-dimensional approximation of $\mathcal{K}$. Given the dynamics $\bff = [f_1, f_2,\dotsc, f_d]^\top$, \Cref{eqn:generator_expansion} implies
\begin{align}
    \label{eqn:SASK_finite_approximation}
    \tensor K \bvphin = \sum_i^d \: \diag{\big( f_i(\bm \xi_1), \dotsc, f_i(\bm \xi_N) \big)} \: \tensor G_i \bm w. 
\end{align}

\subsection{Eigen-decomposition}
\label{subsec:SASK_eigen_decomp}
With the discretized Koopman operator, we intend to obtain the eigenfunction values using the eigen-decomposition. 
One can formulate the eigenvalue problem $\mathcal{K} \vphi_j = \lambda_j \vphi_j$, where $(\vphi_j, \lambda_j)$ is an eigenpair of $\mathcal{K}$. 
Correspondingly, the discrete eigenvalue problem is $\tensor K \bvphin_j = \tilde \lambda_j \bvphin_j$. 
By ~\Cref{eqn:SASK_finite_approximation} and $\bvphin_j = \tensor M \bm w_j$, we have 
\begin{align}
    \sum_i^d \: \diag{\big( f_i(\bm \xi_1), \dotsc, f_i(\bm \xi_N) \big)} \: \tensor G_i \bm w_j = \tilde \lambda_j \tensor M \bm w_j. \label{eqn:SASK_generalized_eigen_problem}
\end{align}
Let $\tensor U := \sum_i^d \: \diag{\big( f_i(\bm \xi_1), \dotsc, f_i(\bm \xi_N) \big)} \: \tensor G_i$, $\tensor U \bm w_j = \tilde \lambda_j \tensor M \bm w_j$ is a generalized eigenvalue problem, from which we solve for $\bm w_j$. 
For compactness, we write this in the matrix form 
\begin{equation}
  \label{eqn:generalized_eig}
\tensor U \tensor W = \tensor M \tensor W \tensor \Lambda, 
\end{equation}
where
$ \tensor W := 
    \begin{bmatrix}
        \bm w_1 & \bm w_2 & \dots & \bm w_N
    \end{bmatrix}, 
    \tensor \Lambda := \text{diag}\left(\tilde \lambda_1, \tilde \lambda_2,
    \dotsc, \tilde\lambda_N\right)$.
Then, the matrix of eigenfunctions can be defined by $\tensor \Phi^N := \tensor M
\tensor W$, whose $j$th column is $\bvphin_j$. 

We note that SASK requires solving a generalized eigenvalue problem while ASK uses a standard eigen-decomposition.
This is because matrix $\tensor M$ is identity matrix $\tensor I$ in ASK as it uses Lagrange polynomial for the interpolation which indicates that $\Psi_i(\bm\xi_j)=\delta_{ij}$, where $\delta_{ij}$ is the Kronecker delta function. 
Thus, in this setting, the generalized eigenvalue problem~\Cref{eqn:generalized_eig} is degenerated as $\tensor U\tensor W=\tensor W\tensor\Lambda$.
Further, by construction, $\tensor G_i=\tensor D_i \tensor M$, where $\tensor D_i$ is the differentiation matrix in the $i$th direction. 
This formula reduces to $\tensor G_i=\tensor D_i$ when $\tensor M=\tensor I$ in ASK (see~\cite{li2022adaptive}).
On the other hand, SASK uses a more general setting for the interpolation, i.e., $\Psi_i$ are not necessarily Lagrange polynomials. 
Therefore, $\tensor M\neq \tensor I$ and the differentiation matrices $\tensor D_i$ need to be obtained by solving a linear system.
Instead of computing $\tensor D_i$ explicitly, we compute $\tensor G_i$ in SASK.


\subsection{Constructing the solution}
\label{subsec:SASK_solution_construction}
The eigen-decomposition yields eigenfunction values $\vphin_j$ at the sparse grid points $\bm \xi_l$. 
By construction,
the central point of the domain is also the first sparse grid point generated. 
For example, $(0,0,\dotsc,0) = \bm \xi_1$ for a multi-dimensional domain $[-1, 1]^d$. 
Hence, to avoid interpolating the eigenfunction when $\bm x_0 \notin \{\bm \xi_l\}$, we propose to construct a neighborhood of $\bm x_0$ defined by $[\bm x_0 - \bm r, \bm x_0 + \bm r]$, where $\bm r = (r_1, r_2, \dotsc, r_d)^\top$ is the radius. 
Equivalently, the neighborhood in dimension $i$ is $[x_0^i - r_i, x_0^i + r_i]$. 
For simplicity, we apply the isotropic setting with $r := r_1 = r_2 = \dotsc = r_d$ in this work, but we emphasize that it is not necessary, and that the anisotropic setting might be more effective. 
Therefore, the observable of the dynamical system is constructed as
\begin{align}
    \label{eqn:obs}
    g_N(\bm x(t)) = \sum_{j = 1}^N \tilde c_j \vphin_j(\bm x_0) e^{\tilde \lambda_j t}. 
\end{align}
Setting $t = 0$, we compute the approximate Koopman modes $\tilde c_j$ using the following equation,
\begin{align*}
    g(\bm x_0) \approx g_N(\bm x_0) = \sum_{j = 1}^N \tilde c_j \vphi_j^N(\bm x_0),
\end{align*}
which must be satisfied for different initial conditions in the neighborhood of $\bm x_0$.
Thus, by considering all sparse grid points as different initial conditions, we have 
\begin{align*}
    g(\bm \xi_l) \approx g_N(\bm \xi_l) = \sum_{j = 1}^N \tilde c_j \vphi_j^N(\bm \xi_l), \quad l = 1, 2, \dotsc, N.
\end{align*}
These formulas can be summarized in a matrix form by defining the matrix of the sparse grid as 
\begin{align*}
    \tensor\Xi := 
    \begin{bmatrix}
        (\bm \xi_1)^\top \\
        (\bm \xi_2)^\top \\
        \vdots \\
        (\bm \xi_N)^\top
    \end{bmatrix}
    =
    \begin{bmatrix}
        \xi_{11} & \xi_{12} & \dotsc & \xi_{1d} \\
        \xi_{21} & \xi_{22} & \dotsc & \xi_{2d} \\
        \vdots & \vdots & \ddots & \vdots \\
        \xi_{N1} & \xi_{N2} & \dotsc & \xi_{Nd} \\
    \end{bmatrix},
\end{align*}
and denoting column $i$ of the matrix by $\tensor \Xi_i$. If we choose the vector-valued observable $\bm g(\bm x) = \bm x$, then the Koopman modes must satisfy $\tensor \Phi^N \bm c_i = \tensor \Xi_i$ for all $i$. The Koopman modes are computed by solving these linear systems. In a more compact form, $\tensor \Phi^N \tensor C = \tensor \Xi$. In particular, $\bm c_i$ is column $i$ of the matrix $\tensor C = (c_{ji})$, containing the Koopman modes for dimension $i$.

Finally, $\bm\xi_1=\bm x_0$ by construction, and hence, $\vphi_j^N(\bm x_0)$ is the first element of vector $ \bvphin_j$, denoted by $(\bvphin_j)_1$.
Therefore, the solution of the dynamical system is constructed as
\begin{align}
    \label{eqn:sol}
    \bm x(t) = \sum_{j = 1}^N \tilde c_j \vphin_j(\bm x_0) e^{\tilde \lambda_j t} = \sum_{j = 1}^N \tilde c_j ( \bvphin_j)_1 e^{\tilde \lambda_j t},
\end{align}
wherein the observable function is identity. 

\subsection{Adaptivity}
Due to the finite-dimensional approximation of $\mathcal{K}$ and local approximation (in the neighborhood of $\bm x_0$) of $\varphi$, the accuracy of the solution decays as the system evolves in time. This is particularly the case for systems with highly nonlinear dynamics. 
To solve this problem, we adaptively update $\tensor \Phi^N$, $\tensor \Lambda$, and $\tensor C$ via procedures discussed in~\Cref{subsec:SASK_finite_approximation}--~\Cref{subsec:SASK_solution_construction}. 

Specifically, we set a series of check points in the time span $0 < \tau_1 < \tau_2 < \dotsc < \tau_n < T$. 
On each of the point, the algorithm examines whether the neighborhood of $\bm x(\tau_k)$ is ``valid'', so as to further guarantee the accuracy of the finite-dimensional approximation.
For such a purpose, we define the acceptable range 
\begin{align}
    \label{eqn:acceptable_range}
    R_i := \left[L_i + \gamma r, U_i - \gamma r \right],
\end{align}
where $L_i, U_i$ are the lower and upper bounds, $r$ is the radius mentioned in ~\Cref{subsec:SASK_solution_construction}, and $\gamma \in (0, 1]$ is a tunable parameter. At the initial time point, $L_i = x_0^i - r$ and $U_i = x_0^i + r$. For the current state $\bm x(\tau_k) = \left( x_1(\tau_k), x_2(\tau_k), \dotsc, x_d(\tau_k) \right)^\top$, the neighborhood $R_1 \times R_2 \times \dotsc \times R_d$ is valid if $x_i(\tau_k) \in R_i$ for all $i$. In the case where at least one component $x_i(\tau_k) \notin R_i$, we realize the update by the following procedures:
\begin{enumerate}
    \item Update $L_i = x_i(\tau_k) - r, \: U_i = x_i(\tau_k) + r$ for all $i$. \label{itm:update_1}
    
    \item Generate the sparse grid and compute matrices $\tensor M, \tensor G_i$. \label{itm:update_2}
    
    \item Apply the eigen-decomposition to update $\tensor \Phi^N, \tensor \Lambda$. \label{itm:update_3}
    
    \item Compute the Koopman modes $\tensor C$ with the updated $\tensor \Phi^N$. \label{itm:update_4}
    
     \item Construct solution $\bm x(t)$ by replacing $e^{\tilde \lambda_jt}$ with $e^{\tilde \lambda_j (t-\tau_k)}$ in~\Cref{eqn:sol}.
\end{enumerate}
Step 5 above comes from the adjustment $t_0 = \tau_k$ and $\bm x_0 = \bm x(t_0) = \bm x(\tau_k)$ whenever the update is performed. Notably, the parameter $\gamma$ controls the strictness of the validity check. 
When $\gamma$ is large, the updates occur more frequently. 
Setting $\gamma = 1$ is tantamount to forcing an update at every check point. 
As addressed in~\cite{li2022adaptive}, SASK also differs from traditional ODE solvers as it does not discretize the system in time, and the check points are essentially different from the time grid points in traditional solvers.
Instead, the discretization is in the state space. As a result, SASK is time-mesh-independent.

\subsection{Algorithm summary}
To leverage the properties of the Koopman operator, \Cref{subsec:SASK_finite_approximation}--~\Cref{subsec:SASK_solution_construction} finds the finite-dimensional approximation of the Koopman operator, and approximates the eigenfunctions and eigenvalues. 
The solution is obtained by a linear combination of the eigenfunctions. 
In order to preserve accuracy as time evolves, the adaptivity is added into the numerical scheme.
The complete algorithm is summarized in ~\Cref{algo:SASK_pseudo_code}.
\begin{algorithm}[htpb]
    \caption{Sparse-Grid-Based Adaptive Spectral Koopman Method}
    \label{algo:SASK_pseudo_code}
    \begin{algorithmic}[1]
        \REQUIRE $n, T, \bm x(0), r, \kappa, \gamma$
        \STATE{Set check points at $0 = \tau_0, \tau_1, \dotsc, \tau_n < T$.}
        \STATE{Let $L_i=x_0^i-r, U_i = x_0^i+r$ and set neighborhood $R_i=[L_i+\gamma r, U_i-\gamma r]$ for $i = 1,2,...,d$.}
        \STATE{Generate sparse grid points $\{\bm \xi_l \}_{l=1}^N$ and compute $\tensor M, \tensor G_i$ for $i = 1,2,\dotsc,d$.}
        \STATE{Apply eigen-decomposition to $\tensor U \tensor W = \tensor M \tensor W \tensor \Lambda$ and compute $\tensor \Phi^N = \tensor M \tensor W$.}
        \STATE{Solve linear system $\tensor \Phi^N \tensor C = \tensor \Xi$, where $\tensor \Xi$ is defined in ~\Cref{subsec:SASK_solution_construction}.}
        \FOR{$k= 1,2,3,\dotsc, n$}
            \STATE{Let $\nu_j$ be the first element of the $j$th column of $\tensor \Phi$. Construct solution at time $\tau_k$ as $\bm x(\tau_k) = \displaystyle\sum_j \tensor C(j, :) \nu_j e^{\tilde \lambda_j (\tau_k - \tau_{k-1})}$, where $\tensor C(j, :)$ is the $j$th row of $\tensor C$.}
            \IF{$x_i(\tau_k) \notin R_i$ for any $i$}
                \STATE{Set $L_i = x_i(\tau_k) - r, \: U_i = x_i(\tau_k) + r$ and $R_i = [L_i + \gamma r, U_i - \gamma r].$}
                \STATE{Repeat steps 3 - 5.}
            \ENDIF
        \ENDFOR
        \RETURN $\bm x(T) = \displaystyle\sum_{j} \tensor C(j,:)\nu_j e^{\tilde \lambda_j (T-\tau_{n})}.$
    \end{algorithmic}
\end{algorithm}
Of note, the construction of the sparse grid is based on the reference domain $[-1, 1]$ in practice. 
We then only need to re-scale the sparse grid points and matrices $\tensor
G_i$ on the reference domain by $\frac{U_i - L_i}{2} (\tensor \Xi_i + 1) + L_i$ and $\frac{2 \tensor G_i}{U_i - L_i}$ respectively, so that they match the real domain $[L_i, U_i]$. 


\section{Numerical Analysis}
In this section, we provide numerical analysis results to understand the
performance of the ASK and SASK method. We start with a lemma adapted
from~\cite{mezic2020spectrum}:
\begin{lemma}
Consider a linear dynamical system $\frac{\dif \bm x}{\dif t} =\tensor A\bm x$,
where $\bm x\in \mathbb{R}^d$ and $\tensor A\in \mathbb{R}^{d\times d}$. Then
the eigenvalues of $\tensor A$ are eigenvalues of the Koopman operator, and the
associated Koopman eigenfunctions are 
$\varphi_j(\bm x) = \langle \bm x, \bm w_j\rangle$ for $j=1,2,\dotsc,d$, where 
$\bm w_j$ are eigenvectors of $\tensor A^\top$ such that 
$\Vert w_j \Vert_2 = 1$ and $\langle\cdot, \cdot\rangle$
denotes the complex inner product on the manifold $\mathcal{M}$.
\end{lemma}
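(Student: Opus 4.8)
The plan is to verify the asserted eigenpairs directly from the action of the infinitesimal generator rather than from the flow map, relying on the identity $\mathcal{K}\varphi = \bff\cdot\nabla\varphi = \frac{\dif\varphi}{\dif t}$ recorded in~\Cref{eqn:Koopman_derivative_relation} and~\Cref{eqn:generator_expansion}. For the linear system the dynamics is $\bff(\bm x)=\tensor A\bm x$, so the generator specializes to $\mathcal{K}\varphi(\bm x) = (\tensor A\bm x)\cdot\nabla\varphi(\bm x)$, and it suffices to substitute the candidate observable $\varphi_j(\bm x)=\langle\bm x,\bm w_j\rangle$ and check that the generator reproduces it up to a scalar factor.

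First I would compute $\nabla\varphi_j$. Because $\varphi_j$ is a linear functional of the real variable $\bm x$, its gradient is a constant vector equal to $\bm w_j$ (under the convention that the complex inner product is conjugate-linear in its first argument, so that $\varphi_j(\bm x)=\bm w_j^\top\bm x$ for real $\bm x$). Substituting yields $\mathcal{K}\varphi_j(\bm x)=(\tensor A\bm x)^\top\bm w_j=\bm x^\top\tensor A^\top\bm w_j$. I would then invoke the hypothesis that $\bm w_j$ is an eigenvector of $\tensor A^\top$, i.e.\ $\tensor A^\top\bm w_j=\lambda_j\bm w_j$, to obtain $\mathcal{K}\varphi_j(\bm x)=\lambda_j\,\bm x^\top\bm w_j=\lambda_j\varphi_j(\bm x)$, which is precisely the Koopman eigenvalue equation $\mathcal{K}\varphi_j=\lambda_j\varphi_j$. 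An equivalent and perhaps cleaner route avoids gradients altogether: differentiate $\varphi_j(\bm x(t))=\bm w_j^\top\bm x(t)$ along trajectories, use $\dot{\bm x}=\tensor A\bm x$, and read off $\frac{\dif}{\dif t}\varphi_j = \bm w_j^\top\tensor A\bm x=\lambda_j\varphi_j$ via $\bm w_j^\top\tensor A=(\tensor A^\top\bm w_j)^\top=\lambda_j\bm w_j^\top$.

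To close the statement I would observe that $\tensor A$ and $\tensor A^\top$ have identical characteristic polynomials and hence the same spectrum, so each $\lambda_j$ produced above is simultaneously an eigenvalue of $\tensor A$; the normalization $\Vert\bm w_j\Vert_2=1$ plays no role in the eigenrelation and is a harmless rescaling. The main obstacle is bookkeeping around the complex inner product: since $\bm x$ is real but $\bm w_j\in\mc^d$ may be complex, I must fix whether the conjugation sits on the first or second slot before differentiating, and then track it when moving $\tensor A$ across the inner product — the wrong choice replaces $\lambda_j$ by $\overline{\lambda_j}$. Because $\tensor A$ is real its spectrum is conjugation-closed, so the lemma holds regardless, but the exact pairing $(\varphi_j,\lambda_j)$ it asserts singles out the convention giving $\nabla\varphi_j=\bm w_j$. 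A secondary point is that the indexing $j=1,\dots,d$ implicitly assumes $\tensor A^\top$ supplies $d$ independent eigenvectors, so the clean count corresponds to the diagonalizable case; I would either assume diagonalizability or state the conclusion per eigenvector.
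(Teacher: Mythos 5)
Your proposal is correct and matches the paper's proof, which in fact presents both of your routes back to back — the trajectory derivative $\frac{\dif}{\dif t}\langle \bm x,\bm w_j\rangle = \langle \tensor A\bm x,\bm w_j\rangle = \langle \bm x,\tensor A^\top\bm w_j\rangle$ and the generator computation via \Cref{eqn:generator_expansion}. Your extra care about which slot of the complex inner product carries the conjugation (to avoid landing on $\overline{\lambda_j}$), and your remark that the count $j=1,\dotsc,d$ presumes a full eigenbasis, are points the paper glosses over, but they do not change the argument.
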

\begin{proof} Let $\lambda_j$ be an eigenvalue of $\tensor A$, then it is also
  an eigenvalue of $\tensor A^\top$. Assume 
  $\tensor A^\top\bm w_j =\lambda_j \bm w_j$, as shown in~\cite{mezic2020spectrum},
  we have
  \[ \frac{\dif \varphi_j}{\dif t}= \frac{\dif}{\dif t}\langle \bm x, \bm
  w_j\rangle =\langle \frac{\dif \bm x}{\dif t}, \bm
  w_j\rangle=\langle \tensor A\bm x, \bm w_j\rangle =\langle \bm x,\tensor
  A^\top\bm w_j\rangle =\lambda_j\langle \bm x, \bm w_j\rangle
  =\lambda_j\varphi_j.  \]
  Thus, $\varphi_j$ is an eigenfunction of the linear system's Koopman operator.
  Alternatively, using~\Cref{eqn:generator_expansion}, we have 
  \[\mathcal{K}\varphi_j = \sum_{i=1}^d (\tensor A\bm x)_i
  \dfrac{\partial\varphi_j}{\partial x_i}= \langle \tensor A\bm x, \bm w_j \rangle =
  \langle \bm x, \tensor A^\top\bm w_j \rangle = \lambda_j \langle \bm x, \bm w_j\rangle = \lambda_j\varphi_j.
\]
\end{proof}
These two different proofs of this lemma illustrate the connection of temporal 
derivatives and spatial derivatives via the Koopman operator.

Next, as long as $\tensor A$ has a full set of eigenvectors at distinct
eigenvalues $\lambda_j$, we have 
\[  g(\bm x(t_0+t)) = \sum_{j=1}^d c_j \vphi_j(\bm x(t_0)) e^{\lambda_j t} \]
for observable $g:\mathcal{M}\rightarrow \mathbb{R}$, and $g(\bm x(t_0)) =
\sum_{j=1}^d c_j \vphi_j(\bm x(t_0))$. 
When $\varphi_j$ and $\lambda_j$ are perturbed, we have the following convergence
estimate:
\begin{theorem}
  \label{theorem:linear_acc_c}
Consider a linear dynamical system $\frac{\dif \bm x}{\dif t} =\tensor A\bm x$,
where $\bm x\in \mathbb{R}^d$ and $\tensor A\in \mathbb{R}^{d\times d}$. Assume
$\tensor A^\top$ has a full set of eigenvectors at distinct eigenvalues $\lambda_j$.
  Let $\tilde{\varphi}_j(\bm x)$ and $\tilde \lambda_j$ be approximations of 
  $\varphi_j(\bm x)$ and $\lambda_j$, respectively. Consider a scalar observable 
$g(\bm x(t_0+t))=\sum_j^d c_j \vphi_j(\bm x(t_0)) e^{\lambda_j t}$.
  Denote $\varepsilon_{\varphi_0}=\max_{1\leq j\leq d}\vert \varphi_j(\bm x(t_0))
  - \tilde{\varphi}_j(\bm x(t_0)) \vert$
and $\varepsilon_{\lambda}=\max_{1\leq j\leq d} |\lambda_j-\tilde\lambda_j|$. We
have
  \begin{equation}
    \label{eq:bound1}
\left\vert g(\bm x(t_0+t))-\tilde g(\bm x(t_0+t)) \right\vert 
\leq d \Vert \bm c\Vert_{\infty} e^{Re\,\lambda_{\max}t}\left(\max_j |\varphi_j(\bm x(t_0))| \varepsilon_{\lambda} t+
      \varepsilon_{\varphi}e^{\varepsilon_{\lambda}t} \right),
  \end{equation}
  where 
  $\tilde g(\bm x(t_0+t))=\sum_{j=1}^d c_j \tilde\vphi_j(\bm x(t_0)) e^{\tilde\lambda_j t}$
and $Re\,\lambda_{\max}=\max_j Re\,\lambda_j$.
\end{theorem}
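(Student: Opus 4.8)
The plan is to treat this as a first-order perturbation estimate. Since the exact and approximate observables share the same Koopman modes $c_j$ and differ only through the eigenpairs, the discrepancy is a sum of $d$ termwise errors:
\[
g(\bm x(t_0+t)) - \tilde g(\bm x(t_0+t)) = \sum_{j=1}^d c_j\Big( \varphi_j(\bm x(t_0)) e^{\lambda_j t} - \tilde\varphi_j(\bm x(t_0)) e^{\tilde\lambda_j t}\Big).
\]
First I would isolate the eigenfunction error from the eigenvalue error by inserting the intermediate term $\varphi_j(\bm x(t_0)) e^{\tilde\lambda_j t}$, rewriting each summand as
\[
\varphi_j(\bm x(t_0))\big(e^{\lambda_j t} - e^{\tilde\lambda_j t}\big) + \big(\varphi_j(\bm x(t_0)) - \tilde\varphi_j(\bm x(t_0))\big) e^{\tilde\lambda_j t}.
\]
Applying the triangle inequality and bounding $|c_j| \le \Vert\bm c\Vert_\infty$ across the $d$ terms produces the prefactor $d\Vert\bm c\Vert_\infty$ together with the two contributions appearing in the claimed bound.

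The eigenfunction contribution is the easy piece: by definition $|\varphi_j(\bm x(t_0)) - \tilde\varphi_j(\bm x(t_0))| \le \varepsilon_{\varphi_0}$ (written $\varepsilon_\varphi$ in the statement), while $|e^{\tilde\lambda_j t}| = e^{Re\,\tilde\lambda_j\, t}$. Since $Re\,\tilde\lambda_j \le Re\,\lambda_j + |\lambda_j - \tilde\lambda_j| \le Re\,\lambda_{\max} + \varepsilon_\lambda$, this term is controlled by $\varepsilon_\varphi\, e^{Re\,\lambda_{\max} t} e^{\varepsilon_\lambda t}$, which is exactly the second summand in \Cref{eq:bound1}.

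The main work is the eigenvalue contribution, i.e., bounding $|e^{\lambda_j t} - e^{\tilde\lambda_j t}|$. I would use the fundamental theorem of calculus along the complex segment joining $\tilde\lambda_j t$ and $\lambda_j t$,
\[
e^{\lambda_j t} - e^{\tilde\lambda_j t} = (\lambda_j - \tilde\lambda_j) t \int_0^1 e^{\,(\tilde\lambda_j + s(\lambda_j - \tilde\lambda_j)) t}\, \dif s,
\]
so that $|e^{\lambda_j t} - e^{\tilde\lambda_j t}| \le \varepsilon_\lambda\, t \int_0^1 e^{Re(\tilde\lambda_j + s(\lambda_j - \tilde\lambda_j))\, t}\, \dif s$. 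Because the real part of the exponent is affine in $s$, it attains its maximum at an endpoint and is bounded by $\max(Re\,\lambda_j, Re\,\tilde\lambda_j) \le Re\,\lambda_{\max}$ (up to the $\varepsilon_\lambda$ perturbation of $\tilde\lambda_j$), which yields $|e^{\lambda_j t} - e^{\tilde\lambda_j t}| \le \varepsilon_\lambda\, t\, e^{Re\,\lambda_{\max} t}$. Multiplying by $|\varphi_j(\bm x(t_0))| \le \max_j |\varphi_j(\bm x(t_0))|$ reproduces the first summand of \Cref{eq:bound1}.

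The delicate point, and the step I would treat most carefully, is this exponential-difference estimate: one must justify replacing the real part of the exponent along the segment by $Re\,\lambda_{\max}$ and extracting the linear-in-$t$ factor $\varepsilon_\lambda t$ directly from the integral representation, rather than settling for a coarser $e^{\varepsilon_\lambda t}$-type bound that the naive factorization $e^{\lambda_j t}\big(1 - e^{(\tilde\lambda_j-\lambda_j)t}\big)$ would give. Everything else reduces to triangle inequalities and the definitions of $\varepsilon_\varphi$ and $\varepsilon_\lambda$; assembling the two pieces and summing over the $d$ terms then completes the estimate.
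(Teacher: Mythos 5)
Your argument follows the paper's proof almost step for step: the same splitting of each summand via the intermediate term $\varphi_j(\bm x(t_0))e^{\tilde\lambda_j t}$, the same triangle inequality giving the prefactor $d\Vert\bm c\Vert_\infty$, and the same treatment of the eigenfunction piece, which in both cases yields $\varepsilon_{\varphi_0}e^{Re\,\lambda_{\max}t}e^{\varepsilon_\lambda t}$. The only genuine difference is the exponential-difference estimate, and it is worth comparing. The paper factors $|e^{\lambda_j t}|\cdot|1-e^{(\tilde\lambda_j-\lambda_j)t}|$ and bounds the second factor by $|1-e^{\varepsilon_\lambda t}|=e^{\varepsilon_\lambda t}-1$; indeed the last line of the paper's displayed proof contains $|1-e^{\varepsilon_\lambda t}|$ rather than the $\varepsilon_\lambda t$ appearing in \Cref{eq:bound1}, so the stated bound does not follow verbatim from the paper's own chain of inequalities (since $e^{x}-1\geq x$), and the linear form is only recovered in the regime $\varepsilon_\lambda t\ll 1$, as the paper itself remarks afterward. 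Your segment-integral representation extracts the factor $\varepsilon_\lambda t$ directly, which is closer in form to the literal statement; however, the price is that the exponent along the segment is controlled by $\max(Re\,\lambda_j,Re\,\tilde\lambda_j)\leq Re\,\lambda_{\max}+\varepsilon_\lambda$, so your honest conclusion is $|e^{\lambda_j t}-e^{\tilde\lambda_j t}|\leq\varepsilon_\lambda t\,e^{(Re\,\lambda_{\max}+\varepsilon_\lambda)t}$ and the extra factor $e^{\varepsilon_\lambda t}$ cannot simply be dropped as your writeup does. This is the same order of slack as in the paper (note $e^{\varepsilon_\lambda t}-1\leq\varepsilon_\lambda t\,e^{\varepsilon_\lambda t}$), so neither your final line nor the paper's reproduces \Cref{eq:bound1} exactly; both establish the bound with the first summand replaced by $\max_j|\varphi_j(\bm x(t_0))|$ times either $|1-e^{\varepsilon_\lambda t}|$ or $\varepsilon_\lambda t\,e^{\varepsilon_\lambda t}$, which is what one should regard as the actually proven estimate.
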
 
\begin{proof} With the triangle inequality, we have
  \begin{equation}
    \begin{aligned}
      & \left\vert g(\bm x(t_0+t))-\tilde g(\bm x(t_0+t)) \right\vert \\ 
  = &\left \vert \sum_j^d c_j \vphi_j(\bm x(t_0)) e^{\lambda_j t}
  -\sum_j^d c_j \tilde\vphi_j(\bm x(t_0)) e^{\tilde\lambda_j t} \right\vert \\
       \leq &\sum_{j=1}^d |c_j|\cdot \left \vert \vphi_j(\bm x(t_0)) e^{\lambda_j t}-
    \tilde\vphi_j(\bm x(t_0)) e^{\tilde\lambda_j t} \right\vert \\
       \leq & \sum_{j=1}^d |c_j|\cdot \left(\left \vert \vphi_j(\bm x(t_0)) e^{\lambda_j t}\right\vert \left\vert
      1-e^{(\tilde{\lambda}_j-\lambda_j)t}\right\vert 
      +\vert \varphi_j(\bm x(t_0))-\tilde{\varphi}_j(\bm x(t_0)) \vert\cdot \vert e^{\lambda_j t}\vert
      \cdot \vert e^{(\tilde{\lambda}_j-\lambda_j)t}\vert \right) \\
      \leq & d \Vert \bm c\Vert_{\infty} e^{Re\,\lambda_{\max}t} \left(\max_j |\varphi_j(\bm
      x(t_0))|\cdot|1-e^{\varepsilon_{\lambda}t}|+
      \varepsilon_{\varphi_0}e^{\varepsilon_{\lambda}t} \right) .
    \end{aligned}
  \end{equation}
\end{proof}
If the eigen-decomposition solver is accurate, then $\varepsilon_{\varphi_0}$ and
$\varepsilon_{\lambda}$ are very close to zero. Consequently, 
$|1-e^{\varepsilon_{\lambda}t} |\approx \varepsilon_{\lambda}t$, and
$\tilde g$ is very
close to $g$ given accurate Koopman modes $c_j$. For example, when solving 
linear PDEs, $\tensor A$ can be the differentiation
matrix of $\nabla$ or $\Delta$ (or other differential operators in the finite
difference or pseudo-spectral method). Hence, it is promising that when solving
these PDEs based on their semi-discrete form, the time evolution can be
very accurate by ASK. Moreover, since $\varphi_j(\bm x)=\langle \bm x, \bm
w_j\rangle$ in the linear case, we can further bound $\varepsilon_{\varphi}$ and
$|\varphi_j(\bm x(t_0))|$ by $\Vert\bm x(t_0)\Vert_2$ via Cauchy-Schwartz
inequality.

We note that in~\Cref{theorem:linear_acc_c}, Koopman modes $c_j$ are assumed to
be accurate. As shown in~\Cref{subsec:SASK_solution_construction}, the ASK 
method computes these modes by solving linear systems based on the computed 
eigenfunctions $\tilde\varphi_j$ using different initial value $g(\bm x(t_0))$.
The following theorem provides the error estimate in this practical scenario.
\begin{theorem}
  \label{theorem:linear_app_c}
  Given the condition in~\Cref{theorem:linear_acc_c}, if the Koopman modes $c_j$
  are approximated by $\tilde c_j$ that are computed by solving a linear system
  $\tilde{\tensor \Phi} \tilde{\bm c} = \bm g$, where $\tilde\Phi_{ij}=\tilde\varphi_j(\bm x_i)$,
  $g_j = g(\bm x_j)$, and $\bm x_i$ are $d$ different initial values. 
  Let $\varepsilon_{\varphi_{\max}} = \max_{1\leq i\leq d} \max_{1\leq j\leq
  d}|\varphi_j(\bm x_i)-\tilde{\varphi}_j(\bm x_i)|$. Consider 
  $\tilde g(\bm x(t_0+t))=\sum_{j=1}^d \tilde c_j \tilde\vphi_j(\bm x(t_0))
  e^{\tilde\lambda_j t}$, which is an approximation of $g(\bm x(t_0+t))$. Let
  $\delta\tensor\Phi=\tilde{\tensor\Phi}-\tensor\Phi$.
  If 
  $\Vert\tensor\Phi^{-1}\Vert_{\infty}\varepsilon_{\varphi_{\max}}<1/d$,
  we have
    \begin{multline}
\left\vert g(\bm x(t_0+t))-\tilde g(\bm x(t_0+t)) \right\vert \leq \\
d \Vert\bm c\Vert_{\infty} \varepsilon_{\varphi_{\max}} e^{Re\,\lambda_{\max}t}
      \left(\max_j |\varphi_j(\bm x(t_0))|(|1-e^{\varepsilon_{\lambda}t}|+\eta
      e^{\varepsilon_{\lambda}t}) +
      \varepsilon_{\varphi_0}e^{\varepsilon_{\lambda}t}\right),
    \end{multline}
    where $\eta=\frac{\kappa d\varepsilon_{\varphi_{\max}}}{\Vert \tensor \Phi\Vert_{\infty} -
  \kappa d\varepsilon_{\varphi}} $ and   $\kappa=\Vert\tensor\Phi\Vert_{\infty}\cdot\Vert\tensor\Phi^{-1}\Vert_{\infty}$. 
\end{theorem}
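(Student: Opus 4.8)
The plan is to split the total error into a mode-independent part and a mode-perturbation part, so that \Cref{theorem:linear_acc_c} can be reused as a black box. Writing $g$ and $\tilde g$ with their respective modes, eigenfunctions, and eigenvalues, I would insert the hybrid term $\sum_j c_j\tilde\varphi_j(\bm x(t_0))e^{\tilde\lambda_j t}$ and apply the triangle inequality to obtain
\begin{equation}
\bigl\vert g-\tilde g\bigr\vert
\leq \Bigl\vert\sum_{j=1}^d c_j\bigl(\varphi_j(\bm x(t_0)) e^{\lambda_j t}-\tilde\varphi_j(\bm x(t_0)) e^{\tilde\lambda_j t}\bigr)\Bigr\vert
+\sum_{j=1}^d\bigl\vert c_j-\tilde c_j\bigr\vert\,\bigl\vert\tilde\varphi_j(\bm x(t_0))\bigr\vert\,\bigl\vert e^{\tilde\lambda_j t}\bigr\vert .
\end{equation}
The first sum is exactly the quantity bounded in \Cref{theorem:linear_acc_c}, so \eqref{eq:bound1} applies to it verbatim. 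The remaining work is to control the mode error $\vert c_j-\tilde c_j\vert$ and to fold everything back together.

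For the mode error I would use standard perturbation theory for the linear system. Since the exact modes satisfy $\tensor\Phi\bm c=\bm g$ (with $\Phi_{ij}=\varphi_j(\bm x_i)$) and the computed modes satisfy $\tilde{\tensor\Phi}\tilde{\bm c}=\bm g$ with the same right-hand side, subtracting gives $\tensor\Phi(\bm c-\tilde{\bm c})=\delta\tensor\Phi\,\tilde{\bm c}$, hence $\bm c-\tilde{\bm c}=\tensor\Phi^{-1}\delta\tensor\Phi\,\tilde{\bm c}$. Each entry of $\delta\tensor\Phi$ has modulus at most $\varepsilon_{\varphi_{\max}}$, so $\Vert\delta\tensor\Phi\Vert_\infty\leq d\,\varepsilon_{\varphi_{\max}}$. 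Taking $\infty$-norms and bounding $\Vert\tilde{\bm c}\Vert_\infty\leq\Vert\bm c\Vert_\infty+\Vert\bm c-\tilde{\bm c}\Vert_\infty$ yields a self-referential inequality for $\Vert\bm c-\tilde{\bm c}\Vert_\infty$; solving it and multiplying numerator and denominator by $\Vert\tensor\Phi\Vert_\infty$ to introduce $\kappa=\Vert\tensor\Phi\Vert_\infty\Vert\tensor\Phi^{-1}\Vert_\infty$ gives precisely $\Vert\bm c-\tilde{\bm c}\Vert_\infty\leq\eta\Vert\bm c\Vert_\infty$. The hypothesis $\Vert\tensor\Phi^{-1}\Vert_\infty\varepsilon_{\varphi_{\max}}<1/d$ is exactly what keeps $1-\Vert\tensor\Phi^{-1}\Vert_\infty\Vert\delta\tensor\Phi\Vert_\infty$ positive, which guarantees that $\tilde{\tensor\Phi}$ is invertible (so $\tilde{\bm c}$ is well defined) and that $\eta$ is finite and nonnegative.

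Finally I would assemble the two pieces. In the second sum I bound $\bigl\vert\tilde\varphi_j(\bm x(t_0))\bigr\vert\leq\max_j\vert\varphi_j(\bm x(t_0))\vert+\varepsilon_{\varphi_0}$ and $\bigl\vert e^{\tilde\lambda_j t}\bigr\vert\leq e^{Re\,\lambda_{\max}t}e^{\varepsilon_\lambda t}$ (using $Re\,\tilde\lambda_j\leq Re\,\lambda_{\max}+\varepsilon_\lambda$), sum the $d$ terms, and substitute $\Vert\bm c-\tilde{\bm c}\Vert_\infty\leq\eta\Vert\bm c\Vert_\infty$; adding this to the bound \eqref{eq:bound1} for the first sum and collecting the coefficients of $\max_j\vert\varphi_j(\bm x(t_0))\vert$ produces the combined factor $\bigl(\vert1-e^{\varepsilon_\lambda t}\vert+\eta\,e^{\varepsilon_\lambda t}\bigr)$ stated in the theorem. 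I expect the main obstacle to be the perturbation step: closing the self-referential norm inequality cleanly and recognizing that the resulting relative error is exactly $\eta$, while verifying that the smallness condition both ensures invertibility of $\tilde{\tensor\Phi}$ and keeps every geometric-series estimate valid. Everything after that is routine triangle-inequality bookkeeping together with the same elementary exponential estimates already used in \Cref{theorem:linear_acc_c}.
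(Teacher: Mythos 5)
Your proposal follows essentially the same route as the paper's proof: the same insertion of the hybrid term $\sum_j c_j\tilde\varphi_j(\bm x(t_0))e^{\tilde\lambda_j t}$ so that \Cref{theorem:linear_acc_c} handles the first piece, the same perturbation bound $\Vert\bm c-\tilde{\bm c}\Vert_\infty\leq\eta\Vert\bm c\Vert_\infty$ for the linear system $\tensor\Phi\bm c=\bm g$ versus $\tilde{\tensor\Phi}\tilde{\bm c}=\bm g$, and the same exponential estimates to assemble the result. The only difference is that you derive the perturbation inequality explicitly (and note that the smallness condition guarantees invertibility of $\tilde{\tensor\Phi}$), whereas the paper simply cites it as a well-known fact from numerical linear algebra; this is a correct and slightly more self-contained rendering of the identical argument.
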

\begin{proof}
 The Koopman modes $c_j$ in this case satisfy $\tensor \Phi\bm c=\bm g$, where
  $\Phi_{ij}=\varphi_j(\bm x_i)$. Therefore, 
  $\Vert \delta\tensor\Phi \Vert_{\infty} \leq d\varepsilon_{\varphi_{\max}}$.
  Then, a well-known conclusion in numerical linear algebra indicates that
  \[\dfrac{\Vert \bm c-\tilde{\bm c} \Vert_{\infty}}{\Vert\bm c
  \Vert_{\infty}}\leq\dfrac{\kappa\Vert\delta\tensor\Phi \Vert_{\infty} }{\Vert\tensor\Phi\Vert_{\infty}
  -\kappa\Vert\delta\tensor\Phi \Vert_{\infty} } \leq
  \dfrac{\kappa d\varepsilon_{\varphi_{\max}}}{\Vert \tensor \Phi\Vert_{\infty} -
  \kappa d\varepsilon_{\varphi_{\max}}}, \]
 we have
  \begin{equation}
    \begin{aligned}
      & \left\vert g(\bm x(t_0+t))-\tilde g(\bm x(t_0+t)) \right\vert \\ 
      \leq & \underbrace{\left \vert \sum_j^d c_j \vphi_j(\bm x(t_0)) e^{\lambda_j t}
      -\sum_j^d c_j \tilde\vphi_j(\bm x(t_0)) e^{\tilde\lambda_j t}
      \right\vert}_{I} \\ & +
      \left \vert \sum_j^d  c_j \tilde\vphi_j(\bm x(t_0)) e^{\tilde\lambda_j t}
  -\sum_j^d \tilde c_j \tilde\vphi_j(\bm x(t_0)) e^{\tilde\lambda_j t} \right\vert \\
       \leq & I+ \sum_{j=1}^d |c_j-\tilde c_j|\cdot \left \vert 
    \tilde\vphi_j(\bm x(t_0)) e^{\tilde\lambda_j t} \right\vert \\
      \leq & I + d \eta\Vert\bm c\Vert_{\infty} \left(\max_j
      |\varphi_j(\bm x(t_0))| + \varepsilon_{\varphi_{\max}}\right)
      e^{(Re\,\lambda_{\max}+\varepsilon_{\lambda})t} \\
      \leq & d \Vert \bm c\Vert_{\infty} e^{Re\,\lambda_{\max}t} \left(\max_j |\varphi_j(\bm x(t_0))|\cdot |1-e^{\varepsilon_{\lambda}t}|+
      \varepsilon_{\varphi_0}e^{\varepsilon_{\lambda}t} \right) \\ &+  d \eta\Vert\bm c\Vert_{\infty} \left(\max_j
      |\varphi_j(\bm x(t_0))| + \varepsilon_{\varphi_{\max}}\right)
      e^{(Re\,\lambda_{\max}+\varepsilon_{\lambda})t}  \\
      \leq & d \Vert\bm c\Vert_{\infty} \varepsilon_{\varphi_{\max}} e^{Re\,\lambda_{\max}t}
      \left(\max_j |\varphi_j(\bm x(t_0))|(|1-e^{\varepsilon_{\lambda}t}|+\eta e^{\varepsilon_{\lambda}t}) + \varepsilon_{\varphi_0}e^{\varepsilon_{\lambda}t}\right)
    \end{aligned}
  \end{equation}
\end{proof}
\Cref{theorem:linear_app_c} also holds for general cases, i.e.,
when $\bff$ in \Cref{eq:auto} has a full set of eigenvectors at distinct eigenvalues.

Of note, these preliminary analysis results provide the first glance on the accuracy of the ASK (as well as the SASK) method, which will serve as the foundation of more comprehensive study. 
For the most general cases, we need to consider the error of: 1) approximating global eigenfunctions locally; 2) approximating local eigenfunctions using pre-decided basis functions (polynomials in this work); and 3) the eigen-solver and the linear solver. 
Also, we need to investigate the impact of continuous spectrum in some systems.
These analyses require very systematic study and will be included in our future work.


\section{Numerical Results}
\label{sec:SASK_experiments}


The performance of SASK is demonstrated in this section. To solve a PDE, SASK exploits the semi-discrete form of the PDE, where the spatial discretization is performed by the spectral-collocation method. In this way, the PDE is converted to a high-dimensional ODE system. 

This section exemplifies how SASK is capable of solving PDEs accurately with two linear PDEs and two nonlinear PDEs. Notably, we apply a high-accuracy spectral collocation method to generate the reference solution for the PDEs without a close-form solution. 


Computational efficiency is another focus of our work. To this end, we demonstrate the low computational cost of SASK on the PDEs, using the running time against the error in \Cref{subsec:comparison}. As a comparison, fourth order Runge-Kutta (RK4) is incorporated to solve the semi-discretized form, since it is one of the most frequently used conventional numerical methods.

\subsection{Solving PDEs}
To illustrate SASK's effectiveness, we implemented numerical experiments on four PDEs, the details of which will be exhibited subsequently. Specifically, the spatial discretization is based on the Fourier collocation method
(see e.g.,~\cite{ tang2006spectral, hesthaven2007spectral, trefethen2000spectral}) as we impose \emph{periodic} boundary conditions to all the PDEs. The MATLAB code generating differentiation matrices can be found in~\cite{trefethen2000spectral}. Since the induced ODE systems tend to be high-dimensional, we fix the approximation level $\kappa = 1$ so that the computation cost remains feasible as the number of sparse grid points is $2m+1$. Nevertheless, the results turn out to be accurate with such a low approximation level. Suppose the degree of freedom in space is $m$, then the ODE system is $m$-dimensional. We denote $y \in \mr^m$ the SASK solution at different spatial grid points, and $y^*$ the exact solution (or the reference solution). The performance of SASK is quantified by the relative $L_2$ error defined by $\frac{\Vert y - y^* \Vert_2}{\Vert y^* \Vert_2}$ and $L_\infty$ error defined by $\Vert y - y^* \Vert_\infty$. 

\subsubsection{Advection equation}
We consider the following advection equation with an initial condition:
\begin{equation}
  \begin{aligned}
    &  \dfrac{\partial u}{\partial t} + \dfrac{\partial u}{\partial x} = 0, \quad x\in [0, 1], \\
    &  u(x, 0) = 0.2+\sin(\cos(4 \pi x)).
  \end{aligned}
\end{equation}
The closed-form solution is $u(x, t)=u(x-t, 0)$.
The collocation points in space are set as $x_j=\frac{j}{32}, j=0,1,\dotsc,32$, which leads to a 32-dimensional ODE system. 
SASK applied the following set of parameters: $n = 10, r = 1, \gamma = 0.2$. 
The errors at $T = 10$ are $e_{L_2} = 2.81 \times 10^{-12}$ and $e_{L_\infty} = 3.07 \times 10^{-12}$. 
\Cref{fig:SASK_Advection} illustrates the SASK solutions compared with the exact solutions.
It is also observed that the accuracy did not decrease significantly if $n = 1$. 
This is because the semi-discretized system is linear, and adaptivity is barely triggered. 
\begin{figure}[!ht]
  \centering
  \includegraphics[width=0.5\textwidth]{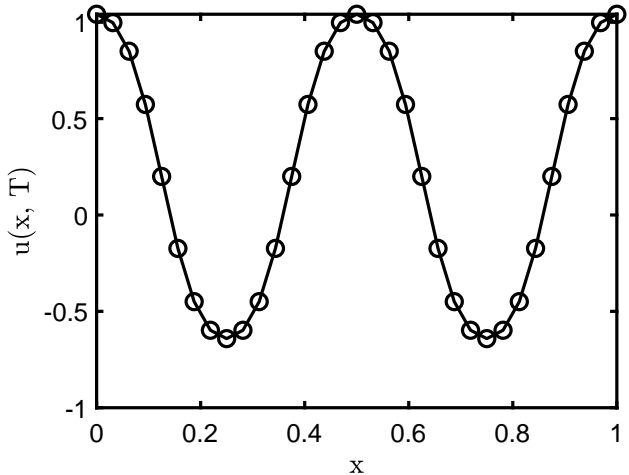}
  \caption{Advection equation: $\medcircle, -$ denote SASK and the reference, respectively.}
  \label{fig:SASK_Advection}
\end{figure}

\subsubsection{Heat equation}
The following is a heat diffusion equation. Here, we also incorporate an initial condition:
\begin{equation}
  \begin{aligned}
    &  \dfrac{\partial u}{\partial t} = \dfrac{1}{80\pi^2}\dfrac{\partial^2 u}{\partial x^2}, \quad x\in [0, 1], \\
    &  u(x, 0) = \sin(4\pi x).
  \end{aligned}
\end{equation}
This equation has a closed-form solution $u(x, t) = \sin(4\pi x) e^{-0.2t}$. 
Specifying parameters $m = 32, n = 10, r = 0.1, \gamma = 0.2$, we obtain SASK numerical solutions at $T = 10$. The comparison between the exact solutions with the numerical solutions is exhibited in~\Cref{fig:SASK_Heat}.  In particular, $e_{L_2} = 8.69 \times 10^{-16}$ and $e_{L_\infty} = 9.08 \times 10^{-15}$.
As in the advection equation case, the semi-discrete form is a linear ODE system, and we observed that SASK achieved high accuracy even when $n=1$, because the adaptivity was barely triggered due to the linearity of the dynamics. 
\begin{figure}[!ht]
  \centering
  \includegraphics[width=0.5\textwidth]{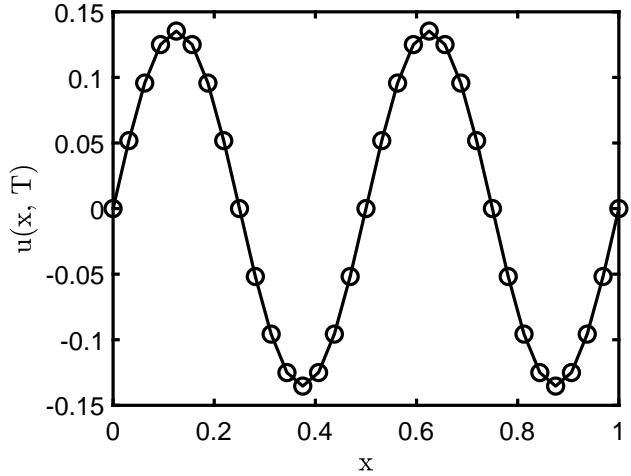}
  \caption{Heat equation: $\medcircle, -$ denote SASK and the reference, respectively.}
  \label{fig:SASK_Heat}
\end{figure}

\subsubsection{Korteweg-de Vries equation}
The next example is the Korteweg-de Vries (KdV) equation with a solitary wave solution:
\begin{equation}
  \label{eq:kdv}
  \begin{aligned}
  \dfrac{\partial u}{\partial t}+\beta u\dfrac{\partial u}{\partial x}+
    \mu\dfrac{\partial^3 u }{\partial x^3}=0, \quad x\in \mathbb{R}, \\
    u(x, 0) = \dfrac{3c}{\beta} \sech^2\left(\dfrac{1}{2}\sqrt{c/\mu} \, x \right).
  \end{aligned}
\end{equation}
The closed-form solution is $u(x,t) = \dfrac{3c}{\beta} \sech^2\left(\dfrac{1}{2}\sqrt{c/\mu}(x-ct) \right)$.
Here, $\beta, \mu, c$ are constants, and $c$ is the wave speed. 
In this test, we set $c = 0.5, \beta = 3, \mu = 9$. 
In general, the solution decays to zeros for $|x|>>1$.
Therefore, numerically we solve this equation in a finite domain $[-p, p]$ with a periodic boundary condition. In this example we set $p=45$. 
Furthermore, as shown in~\cite{fornberg1978numerical, tang2006spectral}, a change of variable step $(x\rightarrow \pi x/p+\pi)$ transforms the solution interval from $[-p, p]$ to $[0, 2\pi]$. 
Consequently, $\beta$ and $\mu$ in~\Cref{eq:kdv} are replaced by $\tilde{\beta} = \frac{\beta \pi}{p}$ and $\tilde{\mu} = \frac{\mu \pi^3}{p^3}$, respectively.
For the spatial discretization, we set $m = 100$ since the behavior of the KdV equation requires finer spatial grids. 
The parameter associated with SASK are $n = 100, r = 0.1, \gamma = 0.8$. 
SASK computed the solutions at $T = 10$, giving the illustration in~\Cref{fig:SASK_KdV}. 
Finally, the errors are $e_{L_2} = 1.60 \times 10^{-4}$ and $e_{L_\infty} = 1.40 \times 10^{-4}$.
\begin{figure}[!ht]
  \centering
  \includegraphics[width=0.5\textwidth]{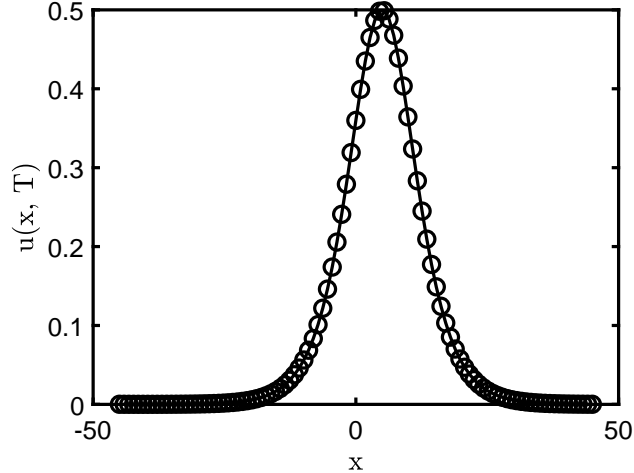}
  \caption{KdV equation: $\medcircle, -$ denote SASK and the reference, respectively.}
  \label{fig:SASK_KdV}
\end{figure}

\subsubsection{Burgers equation}
The last example is a viscous Burgers equation with an initial condition is considered:
\begin{equation}
  \begin{aligned}
    &  \dfrac{\partial u}{\partial t} + u\dfrac{\partial u}{\partial x} = \nu \dfrac{\partial^2 u}{\partial x^2}, \quad x\in [0, 1], \\
    &  u(x, 0) = 0.2+\sin(2\pi x).
  \end{aligned}
\end{equation}
The advection term $uu_x$ is treated in the conservation form, i.e., $\frac{1}{2}(u^2)_x$.
Particularly, the reference solution is obtained by the high-order spectral method. For this example, the degree of freedom in space is $m = 64$, and SASK admits the parameters $n = 50, r = 0.1, \gamma = 1$.
With $\nu=0.005$ and $T=1$, SASK yields $e_{L_2} = 8.85 \times 10^{-4}$ and $e_{L_\infty} = 1.80 \times 10^{-3}$. 
The result is presented in~\Cref{fig:SASK_Burgers}, which indicates that the solution is accurate before the discontinuity is fully developed. Additionally, we will observe the Gibbs phenomenon near the discontinuity, if we keep increasing $T$. 
\begin{figure}[!ht]
  \centering
  \includegraphics[width=0.5\textwidth]{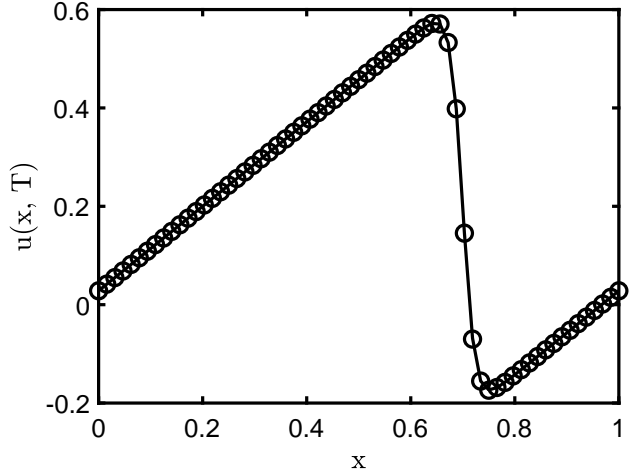}
  \caption{Burgers equation: $\medcircle, -$ denote SASK and the reference, respectively.}
  \label{fig:SASK_Burgers}
\end{figure}

\subsection{Efficiency comparison}
\label{subsec:comparison}

To demonstrate the efficiency of SASK compared with RK4, we target on the running time of the two methods based on the PDEs. Particularly, RK4 leverages the Fourier differentiation matrix to solve the semi-discrete form after spatial discretization. The parameters used in the four PDEs are specified as follows:
\begin{enumerate}[\qquad (a)]
    \item Adevection equation: $m = 32, T = 100, n = 1, r = 1, \gamma= 0.2$;
    \item Heat equation: $m = 32, T = 10, n = 10, r = 0.1, \gamma= 0.5$;
    \item KdV equation: $c = 0.5, \beta = 3, \mu = 9, p = 45, m = 100, T = 10, n = 100, r = 0.1, \gamma= 0.8$;
    \item Burgers equation: $\nu = 0.005, m = 64, T = 1, n = 100, r = 0.1, \gamma= 1$.
\end{enumerate}
We summarize the results in \Cref{tab:PDE_comparison}. The first row of each PDE is the running time, while the second and the third are the relative $L_2$ error and the $L_\infty$ error. 
For the advection equation and the heat equation, the dynamics of their semi-discrete form are linear so SASK needs only a small number of check points and updates to be accurate. 
This results in SASK's higher computational efficiency than RK4's. 
In contrast, Burgers equation and KdV equation require more updates, so it takes SASK more computations to preserve accuracy. 
For the KdV equation, SASK still outperforms RK4, but for the Burgers equation, SASK needs around 10 times the time of RK4 to reach a comparable level of accuracy. 
This is because for the Burgers equation, SASK requires more adaptivity steps to track the evolution of the states, which is probably due to the regularity of the system itself.
Hence, SASK gains an edge over RK4 when $T$ is large, or when the dynamics are not so nonlinear that it allows for a small number of updates. 
\begin{table}[!ht]
  \centering
  \caption{Performance comparison between SASK and RK4 on PDEs: $L_2$ denotes the relative error computed with $2$-norm, while $L_{\infty}$ denotes the error computed with $\infty$-norm. The normalized time is reported.}
    \begin{tabular}{cccc}
    \hline\hline
          &       & SASK  & RK4 \\
    \midrule
    \multirow{3}[1]{*}{Advection Equation} & time   & 1.00 & 21.47 \\
          & $L_2$ & 2.80e-11 & 1.49e-07 \\
          & $L_{\infty}$ & 3.02e-11 & 1.78e-07 \\
          \hline
    \multirow{3}[0]{*}{Heat Equation} & time   & 1.00 & 13.28 \\
          & $L_2$ & 7.57e-15 & 9.24e-15 \\
          & $L_{\infty}$  & 1.19e-15 & 1.85e-15 \\
          \hline
    \multirow{3}[0]{*}{KdV Equation} & time   & 1.00 & 3.97 \\
          & $L_2$  & 1.5972e-04 & 1.5968e-04 \\
          & $L_{\infty}$  & 1.4030e-04 & 1.4090e-04 \\
          \hline
    \multirow{3}[0]{*}{Burgers Equation} & time  & 1.00 & 0.1 \\
          & $L_2$  & 4.94e-04 & 4.88e-04 \\
          & $L_{\infty}$  & 6.13e-04 & 4.96e-04 \\
    \hline\hline
    \end{tabular}%
  \label{tab:PDE_comparison}%
\end{table}%




\section{Conclusion and Discussion}
\label{sec:dicussion_conclusion}

In this work, we propose the sparse-grid-based ASK method to solve autonomous dynamical systems. 
Leveraging the sparse grid method, SASK is an efficient extension of the ASK for
high-dimensional ODE systems. 
Also, we demonstrate SASK's potential of solving PDEs efficiently by solving semi-discrete systems. 
In particular, our numerical results demonstrate that if the semi-discrete form is a linear function of the
discretized solution of the PDE, SASK is very accurate and much more efficient than conventional ODE-solver-based methods because SASK is a high order method for solving dynamical systems. 
Furthermore, by selecting the adaptivity criteria carefully, SASK can solve highly nonlinear PDEs like the KdV equation and deal with large total variation in the solution like the Burgers’ equation. 
Moreover, we only used level-1 sparse grid method in SASK to obtain good results in the illustrative examples. 
In our future work, we will further investigate the selection of adaptivity parameters and the requirement on the accuracy level of the sparse grid method.
Finally, it is possible to use other sampling strategies combined with different basis functions like radial basis functions or activation functions used in neural networks to further enhance the accuracy and efficiency of ASK
for some high-dimensional problems.


\appendix
%

\section{Sparse grid illustration}
\label{append:sparse}
\Cref{fig:sparse_grid} is an illustration of sparse grids and their full grid counterparts.  
\begin{figure}[htbp]
  \centering
  
  \subfloat[$\kappa = 1$]{\includegraphics[width=0.3\textwidth]{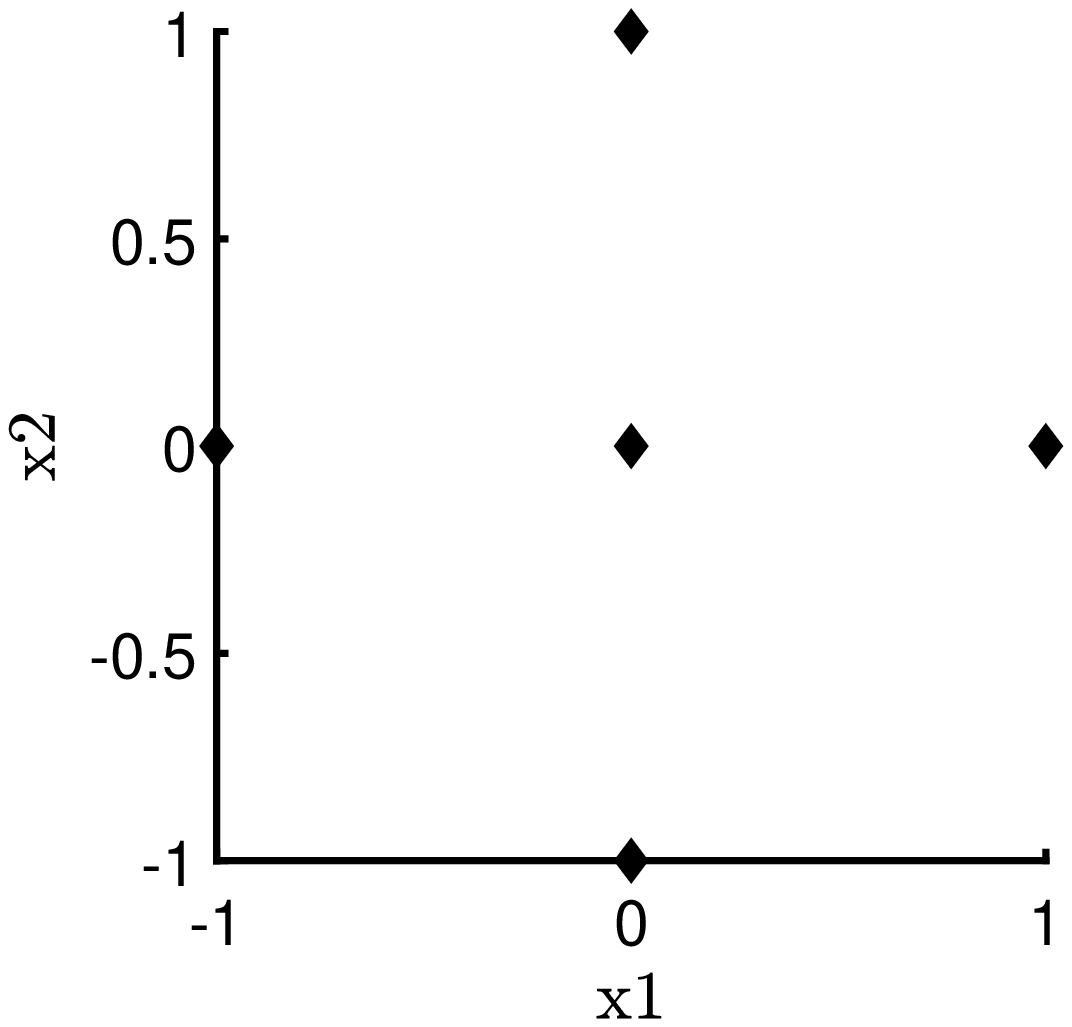}}\quad
  \subfloat[$\kappa = 2$]{\includegraphics[width=0.3\textwidth]{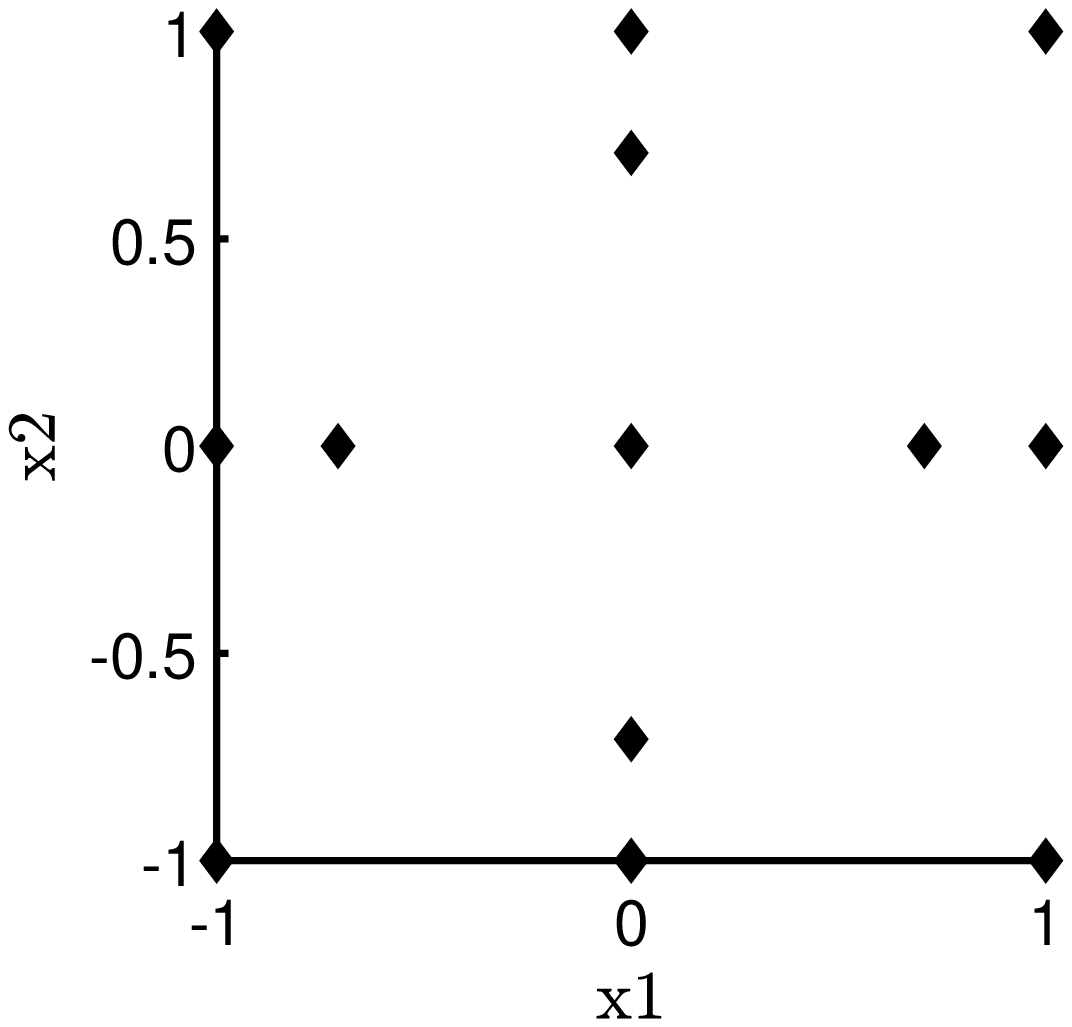}} \quad
  \subfloat[$\kappa = 3$]{\includegraphics[width=0.3\textwidth]{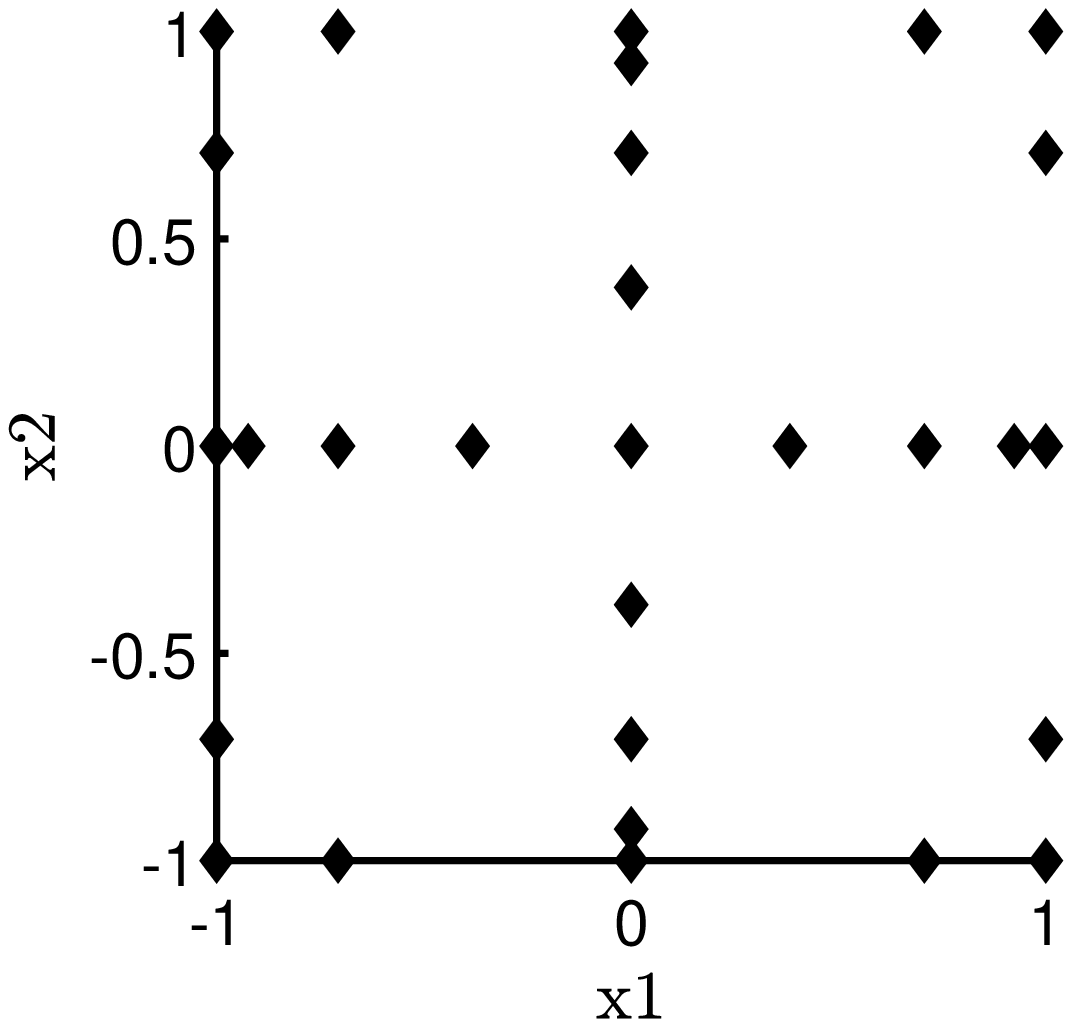}} \\
  \subfloat[$n = 3$]{\includegraphics[width=0.3\textwidth]{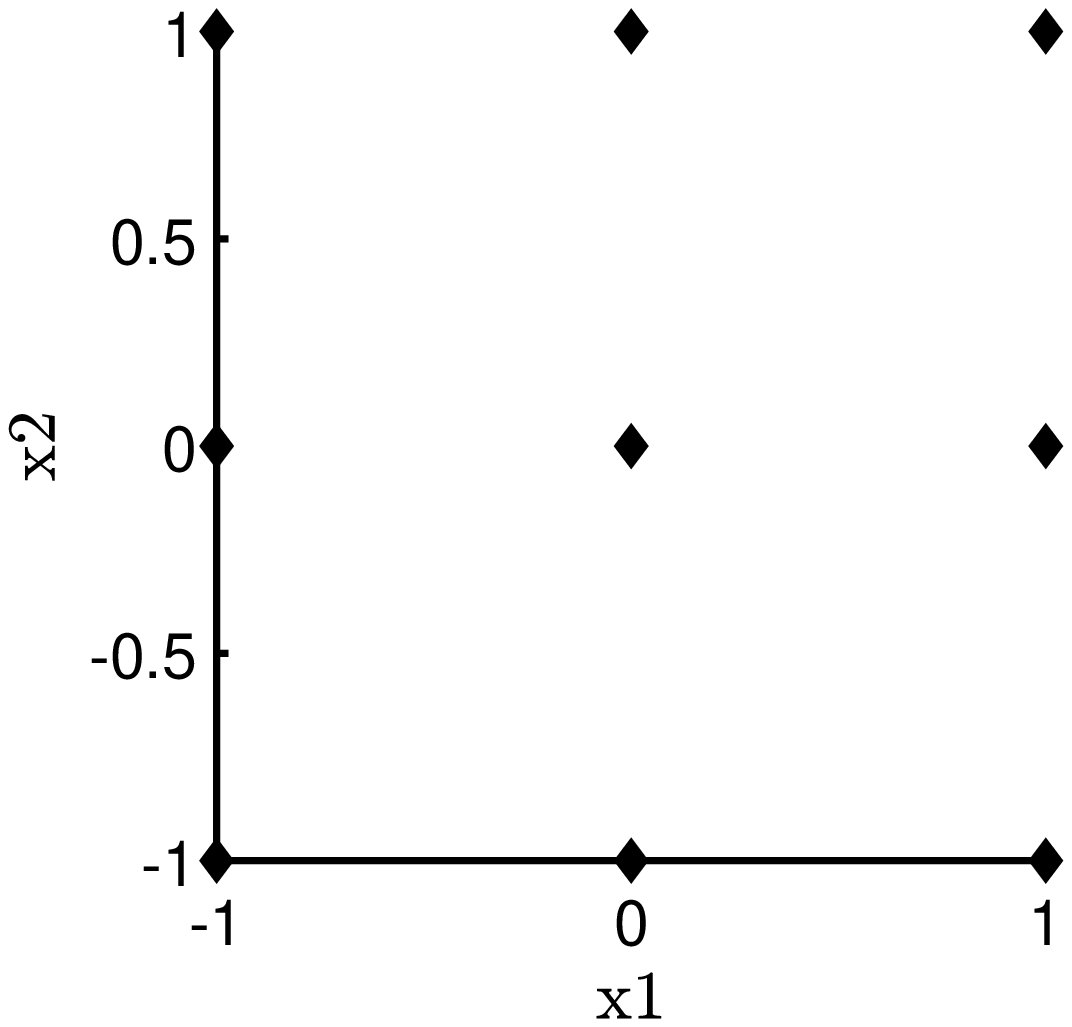}}\quad
  \subfloat[$n = 5$]{\includegraphics[width=0.3\textwidth]{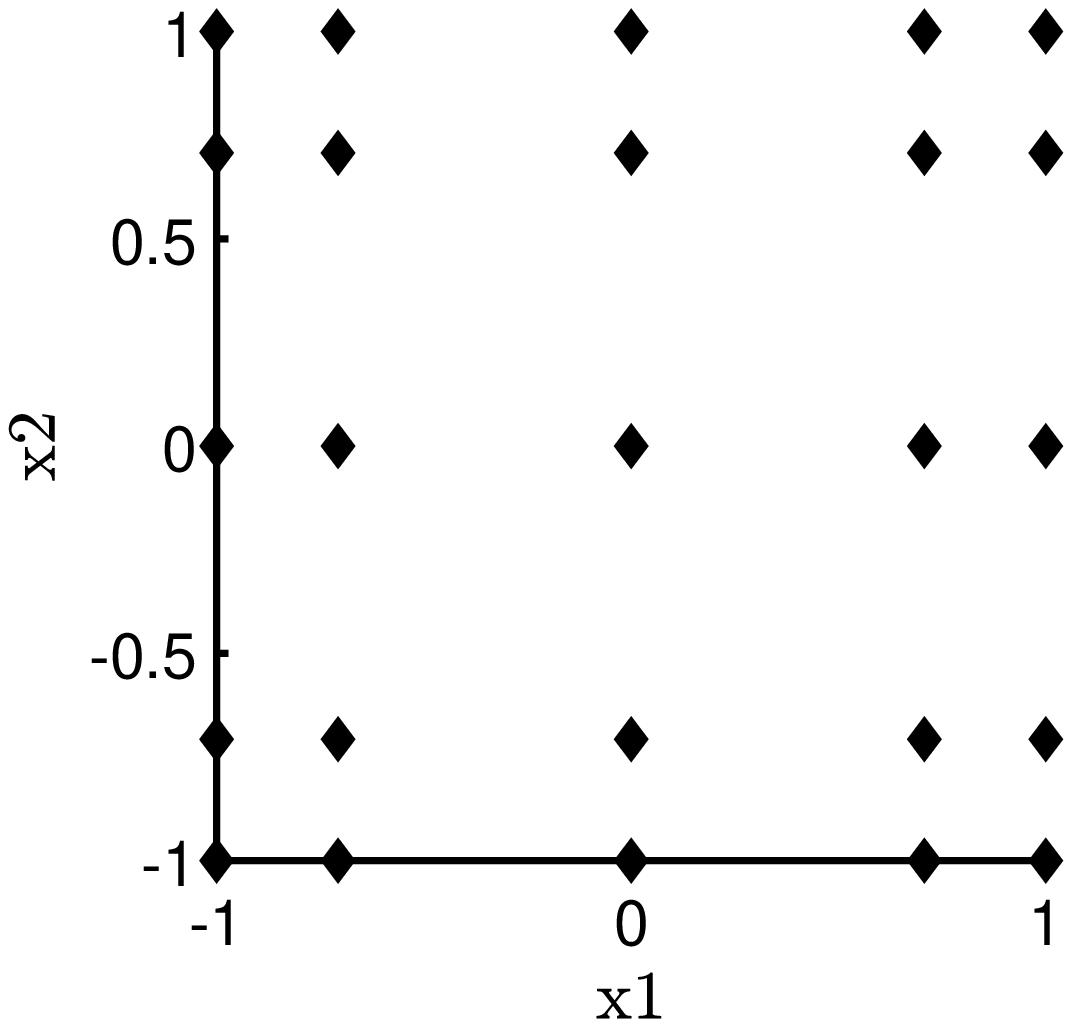}} \quad
  \subfloat[$n = 9$]{\includegraphics[width=0.3\textwidth]{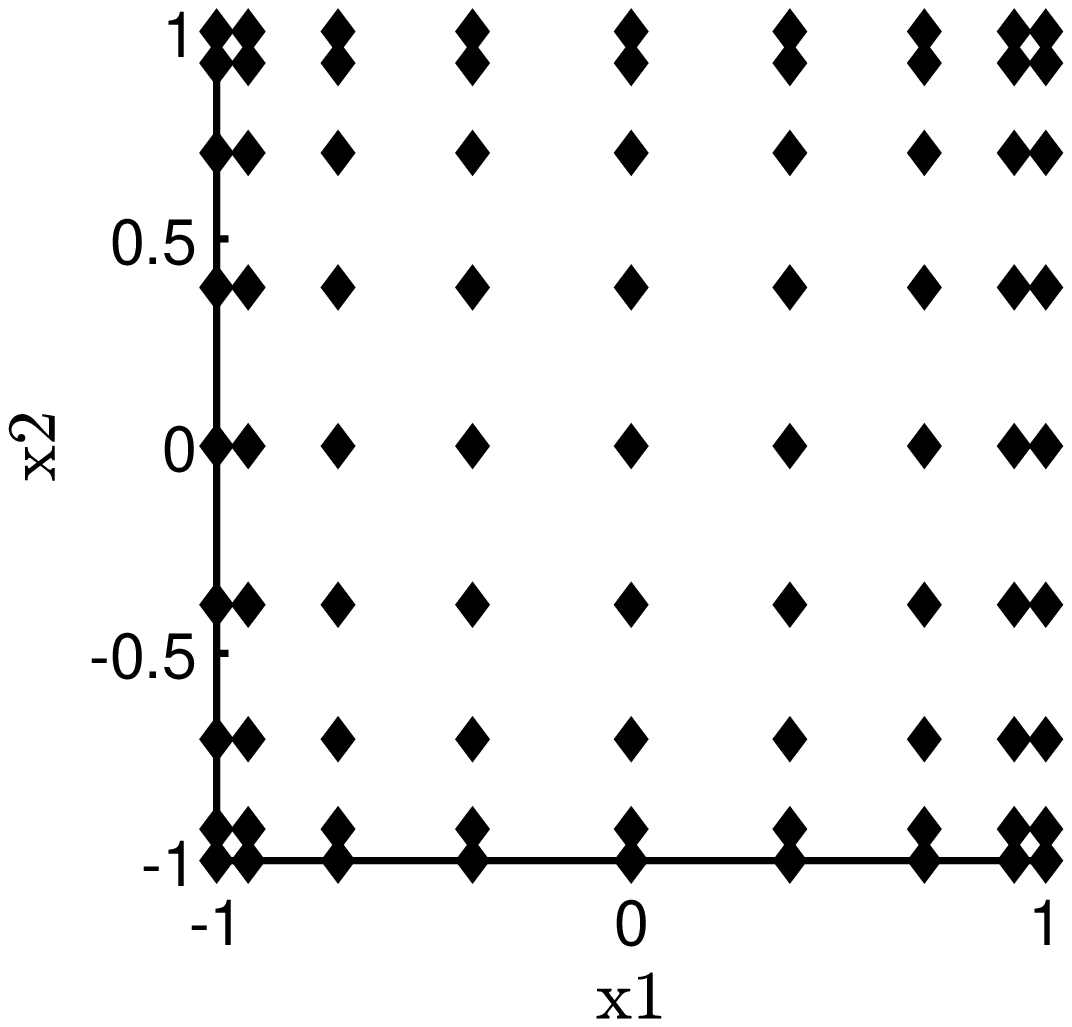}}
  \caption{Sparse grid (first row) - full grid (second row): $n$ denotes the number of points in each dimension.}
  \label{fig:sparse_grid}
\end{figure}
To give a brief view of the growth in the grid size, \Cref{tab:grid_growth} shows that the full grid grows much faster than the sparse grid. Notably, the sparse grid grows surprisingly slow in practice, although its theoretical order is exponential in $d$. 
\begin{table}[htbp]
  \centering
  \caption{Grid Growth: $n$ denotes the number of points in each dimension.}
    \begin{tabular}{ccccccc}
    \hline\hline
    $d$     & $\kappa = 1$ & $n = 3$ & $\kappa = 2$ & $n = 5$ & $\kappa = 3$ & $n = 9$ \\
    \midrule
    2     & 5     & 9     & 13    & 25    & 29    & 81 \\
    3     & 7     & 27    & 25    & 125   & 69    & 729 \\
    4     & 9     & 81    & 41    & 625   & 137   & 6561 \\
    5     & 11    & 243   & 61    & 3125  & 241   & 59049 \\
    \hline\hline
    \end{tabular}%
  \label{tab:grid_growth}%
\end{table}%


\bibliographystyle{plain}
\bibliography{references}
\end{document}